\let\realverbatim=\verbatim
\let\realendverbatim=\endverbatim
\renewcommand\verbatim{\par\addvspace{6pt plus 2pt minus 1pt}\realverbatim}
\renewcommand\endverbatim{\realendverbatim\addvspace{6pt plus 2pt minus 1pt}}
\newcommand{\xrightleftarrows}[3]{\;\mathop{\overset{\raise -1pt \hbox{\scriptsize $#1$}}{\substack{\xrightarrow{\rule{#3}{0pt}} \\[-.9ex] \xleftarrow{\rule{#3}{0pt}\rule{0pt}{1pt}}}}}_{#2}\;}
\newcolumntype{M}[1]{>{\centering\arraybackslash}m{#1}}
\definecolor{mygreen}{RGB}{28,172,0} 
\definecolor{mylilas}{RGB}{170,55,241}
\newsavebox{\astrutbox}
\sbox{\astrutbox}{\rule[-5pt]{0pt}{20pt}}
\newcommand\Sp{\mathcal{S}}
\newcommand\Cx{\mathcal{C}}
\newcommand\Rx{\mathcal{R}}
\newcommand\Ki{\mathcal{K}}
\newcommand\ZZ{\mathbb{Z}}
\newcommand\vvec {\mathbf}
\newtheorem{theorem}{Theorem}[section]
\newtheorem{lemma}{Lemma}[section]
\newtheorem{corollary}{Corollary}[section]
\newtheorem{proposition}{Proposition}[section]
\theoremstyle{definition}
\newtheorem{definition}[theorem]{Definition}
\newtheorem{remark}{Remark}[section]
\numberwithin{equation}{section}
\newtheorem{example}{Example}[section]
\title{Identifiability 
of Stochastically Modelled \\
Reaction Networks}
\author{German Enciso\thanks{Department of Mathematics, University of California, Irvine, USA; e-mail: enciso@uci.edu} 
\and
Radek Erban\thanks{Mathematical Institute, University of Oxford, 
Radcliffe Observatory Quarter, Woodstock Road, Oxford, OX2 6GG, 
United Kingdom; e-mail: erban@maths.ox.ac.uk}, 
\and  
Jinsu Kim\thanks{Department of Mathematics, University of California, 
Irvine, USA; e-mail: jinsu.kim@uci.edu}}
\date{}
\begin{document}

\label{firstpage}
\maketitle

\begin{abstract}
\noindent
Chemical reaction networks describe interactions between biochemical species.
Once an underlying reaction network is given for a biochemical system, the
system dynamics can be modelled with various mathematical frameworks such as
continuous time Markov processes. In this manuscript, the identifiability of
the underlying network structure with a given stochastic system dynamics is
studied. It is shown that some data types related to the associated stochastic
dynamics can uniquely identify the underlying network structure as well as the
system parameters. The accuracy of the presented network inference is
investigated when given dynamical data is obtained via stochastic simulations.
\end{abstract}

\section{Introduction}
To study the properties and dynamics of a system of reacting biochemical
species, a network representation is often used to describe the interactions
between the chemical species involved. A reaction network represents the 
system behaviour with reactions (directed edges) between complexes
(nodes)~\cite{Craciun:2006:MEC,Feinberg:2019:FCR}. Each reaction in a reaction
network indicates loss or gain of the amount of the corresponding chemical
species. Systems of ordinary differential equations (ODEs) are traditionally
used for modelling the time evolution of concentrations of chemical species 
in reaction network theory~\cite{Feinberg:1989:NSC,Angeli:2009:TCR}. Since
biochemical systems may contain chemical species with low copy numbers,
stochastic approaches are often used for modelling their
behaviour~\cite{Erban:2020:SMR}. Stochastic models of homogeneous 
(space independent) chemical reaction networks are written as continuous 
time discrete space Markov chains~\cite{Anderson:2011:DAB,Anderson:2015:SAB}. 

In some applications, the underlying network structure may be unknown 
but information on the associated dynamics is
given~\cite{Craciun:2008:ICR,Komorowski:2011:SRA}. The main focus of this 
paper is to identify the unknown network structure of a stochastic 
reaction system by using dynamical information. Identifiability of 
reaction systems has been studied under deterministic ODE modeling by 
Craciun and Pantea~\cite{Craciun:2008:ICR} and 
Szederk\' enyi et al~ \cite{szederkenyi2011inference}. They present 
examples of reaction systems that admit the same deterministic dynamical 
system but have different network structure and parameters. 
In Figure~\ref{figure1}, we illustrate this lack 
of identifiability using two simple reaction systems. 
They both include one chemical species $X_1$, which is subject 
to two chemical reactions 
\begin{eqnarray}
&&\mbox{first reaction system:}
\hskip 2.48cm
\emptyset \mathop{\longrightarrow}^{1} X_1 \, ,
\qquad \hskip 3mm
X_1 \mathop{\longrightarrow}^{1} \emptyset \, ,
\hskip 12mm
\label{firssystem} \\
&&\mbox{second reaction system:}
\hskip 2cm
\emptyset  \mathop{\longrightarrow}^{1/4} 4 X_1 \, , 
\qquad \hskip 1mm
X_1 \mathop{\longrightarrow}^{1} \emptyset \, .
\hskip 12mm
\label{secondsystem}
\end{eqnarray}
Denoting $x_1(t)$ the concentration of the chemical species $X_1$ and 
using mass-action deterministic description, the time evolution of both
reaction systems~(\ref{firssystem}) and (\ref{secondsystem}) is described 
by the same ODE
\begin{equation}
\frac{\mbox{d} x_1}{\mbox{d}t}
=
1 - x_1.
\label{ODEsystem}
\end{equation}
Solving the ODE~(\ref{ODEsystem}) with the initial condition $x_1(0)=0$, 
we obtain $x_1(t) = 1 - \exp[-t]$, which is plotted in Figure~\ref{figure1}(a).
Since both reaction systems~(\ref{firssystem}) and~(\ref{secondsystem}) contain
only reactions of zero and first order, we can analytically solve the 
chemical master equation corresponding to the stochastic
model~\cite{Gadgil:2005:SAF,Jahnke:2007:SCM}. We obtain that the mean number 
of molecules, $\langle X_1 \rangle$, is for both systems given as a solution 
of the ODE system~(\ref{ODEsystem}). In the case of the first reaction
system~(\ref{firssystem}), $X_1$ is Poisson distributed at every time 
$t$~\cite{Erban:2020:SMR,Jahnke:2007:SCM}. Therefore, the variance 
$\langle X_1^2 \rangle - \langle X_1 \rangle^2$ is equal to the mean 
$\langle X_1 \rangle = 1 - \exp[-t].$ In Figure~\ref{figure1}(b), we show 
that it differs from the variance obtained using the second reaction
system~(\ref{secondsystem}), which is given as
$
\langle X_1^2 \rangle - \langle X_1 \rangle^2 
= 
(5 - 2 \exp[-t] - 3 \exp[-2t])/2
$.

\begin{figure}
\leftline{(a) \hskip 7.9cm (b)}
\centerline{\hskip 8mm \includegraphics[height=6.2cm]{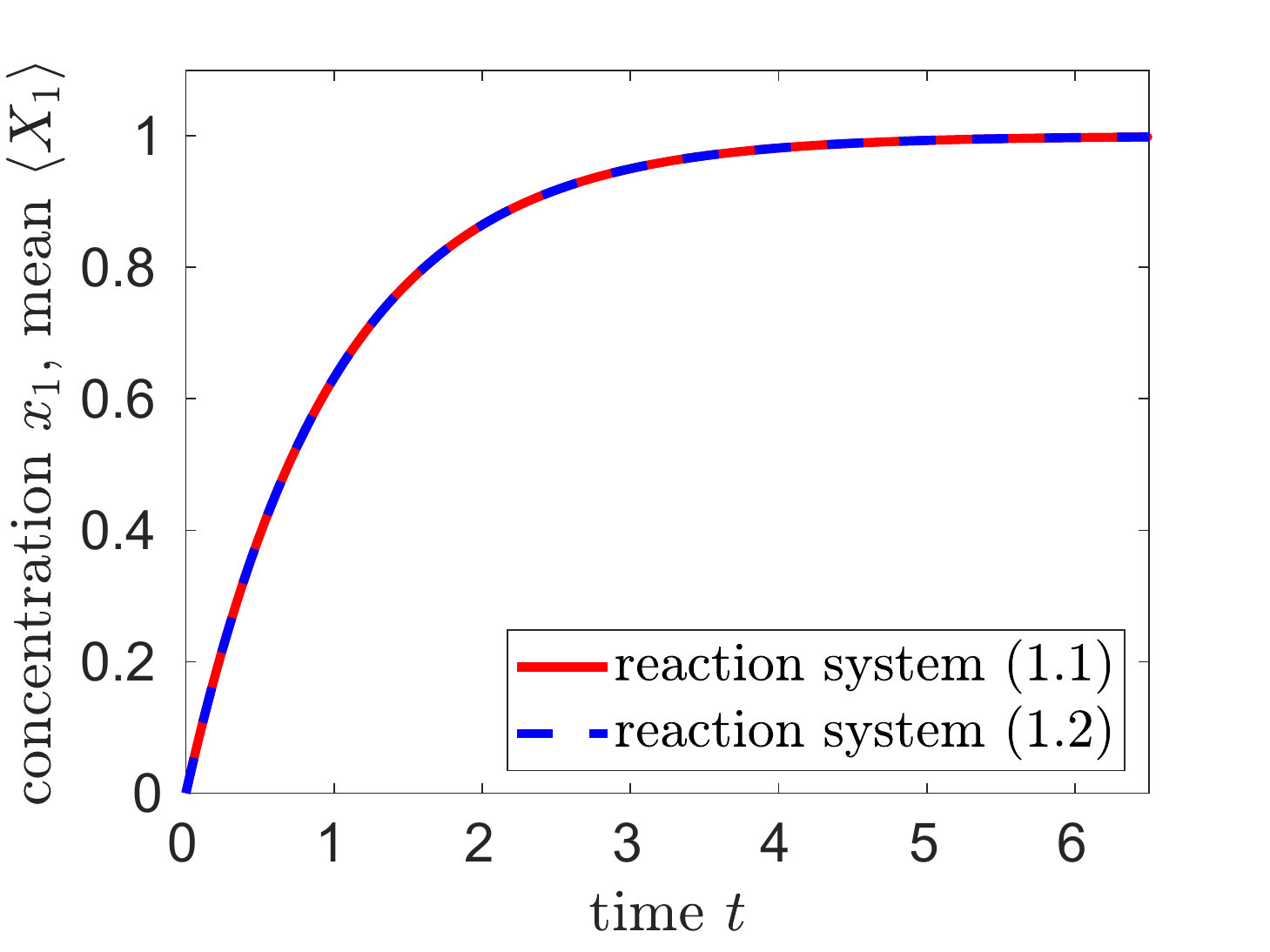}
$\;\;$\includegraphics[height=6.2cm]{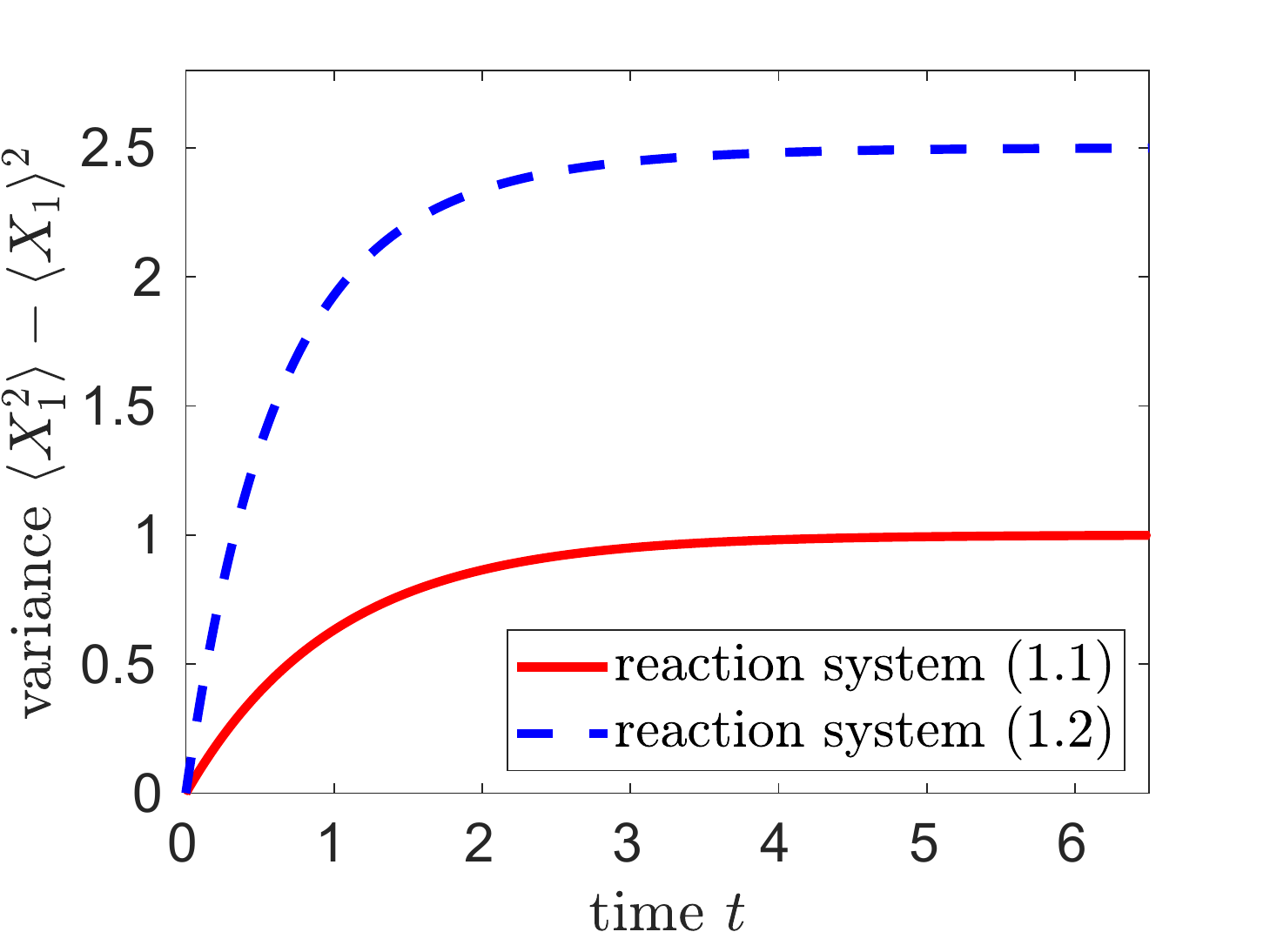}}
\caption{(a) {\it The solution of ODE~$(\ref{ODEsystem})$ with 
initial condition $x_1(0)=0$.}
(b) {\it Variance of the number of molecules of chemical species $X_1$ for 
the chemical system $(\ref{firssystem})$ (red solid line) and 
the chemical system $(\ref{secondsystem})$ (blue dashed line).}}
\label{figure1}
\end{figure}

Our example illustrates that the dynamics obtained by the ODE
model~(\ref{ODEsystem}) cannot be used to distinguish between reaction
systems~(\ref{firssystem}) and~(\ref{secondsystem}) and the reaction network 
is therefore not identifiable in the deterministic context. However, since
their stochastic models do differ (as shown in Figure~\ref{figure1}(b)), we
have potential to use the stochastic data to distinguish between the reaction
systems~(\ref{firssystem}) and~(\ref{secondsystem}). This peculiar behaviour 
is not restricted to our illustrative example. 
Plesa et al~\cite{Plesa:2018:NCM} showed that any reaction network can 
be redesigned in such a way that the deterministic dynamics are preserved,
while the controllable state-dependent noise is introduced into the stochastic
dynamics. In this way, one can systematically obtain a family of reaction
networks, which have qualitatively different stochastic dynamics, but they 
are described by the same deterministic model~\cite{Plesa:2018:NCM}. 
In applications, the long-term dynamics of some gene regulatory networks
(involving multiple time-scales) can consist of a unique attractor at 
the deterministic level (unistability), while the long-term probability
distribution at the stochastic level may display multiple maxima
(multimodality)~\cite{Duncan:2015:NIM,Plesa:2019:NIM}.

In this paper, we explore how the discrete nature of the associated 
mass-action stochastic system can help uncover the underlying reaction 
network. For a given continuous time Markov chain, we quantify the amount 
of transition rate information needed to uniquely identify the underlying
network and the system parameters. For practical implementation of network
inference, the presented approach can be used to infer the underlying 
reaction network with transition data obtained from stochastic simulations. 
The accuracy of this network inference idea is also investigated.

For each reaction, the reaction intensity, which determines the likelihood 
of firing the reaction, is proportional to a positive constant, so-called 
a rate constant such as numbers $1/4$ and $1$ in our illustrative reaction
system~(\ref{secondsystem}). The rate constants can alter the system 
behaviour significantly and correspond to qualitative differences between
deterministic and stochastic descriptions, for example, for systems close 
to bifurcations of deterministic ODEs~\cite{Erban:2009:ASC,Plesa:2017:TMS}.
When the reaction network topology is given, the rate constants often need 
to be estimated as missing parameters. Numerous different statistical and
mathematical techniques have been employed in the literature for parameter
estimation using dynamical data, such as information
theory~\cite{Komorowski:2011:SRA}, Bayesian 
statistics~\cite{craciun2013statistical,golightly2006bayesian,catanach2020bayesian,warne2019simulation}, 
system identification theory~\cite{walter1997identification}, machine 
learning~\cite{baldi2001bioinformatics} and tensor-structured parametric
analysis~\cite{Liao:2015:TMP}. 

In addition to parameter estimation, the underlying network topology 
is also often unknown or only partially known. There have also been 
a number of methods developed in the literature to infer network 
information~\cite{chattopadhyay2013inverse,Langary:2019:ICR,Wang:2019:IRN}.
For instance, Wang et al.~\cite{Wang:2019:IRN} study deterministic network
inference using multiplex flow cytometry experimental data and toric 
systems theory. Chattopadhyay et al.~\cite{chattopadhyay2013inverse} 
proposed a novel inference method for stochastic reaction systems with 
convex polytopes, which are formed by combinations of reaction vectors 
captured within a short time window. Other papers focus on statistical 
information and Bayesian analysis to infer networks of correlations 
among species~\cite{Langary:2019:ICR,gupta2014comparison, Loskot:2019:CRM, markowetz2007inferring, villaverde2014reverse}, but, to our knowledge, 
there is no previous work 
that characterizes when the transition data of a stochastic system can 
be used to completely identify the underlying reaction network. 

For the validity of such parameter estimation tools and network inference
algorithms, we consider identifiability of a reaction system. The underlying
network structure of a dynamical system may not be uniquely identified if 
prior information is partially given. For example, when a continuous time 
Markov chain is restricted to a subset of the state space because of a
conservation law, this stochastic system can be associated with two 
different reaction networks, as illustrated in Example~\ref{ex:non unique1}.
In Section~\ref{sec3}, we prove that the network topology and the system
parameters can be uniquely identified provided that we have full dynamic
information in a sufficiently large finite region of the state space.

To formulate our results, we begin with introducing our notation in
Section~\ref{sec2}. In Section~\ref{sec3}, we present the main algorithm 
that uses the transition rates of a given continuous time Markov chain 
to infer the underlying network structure and parameters. In Section 4, 
we show that a general continuous time Markov chain with 
polynomial transition rates can be identifiable as a mass-action reaction 
system. In Section 5, with given stochastic dynamical information about 
the transition rates, we investigate how accurately the underlying network
structure and system parameters can be identified.

\section{Notation and terminology}
\label{sec2}

In this section, we introduce our notation and basic definitions that are 
used throughout the rest of our manuscript.

\subsection{Reaction networks}\label{subsec:crn}
A reaction network $(\Sp,\Cx,\Rx)$ consists of \emph{species}, \emph{complexes} and \emph{reactions}. Each reaction is of the form
\begin{equation}
\label{eq:reaction}
\sum_{i=1}^d y_i X_i 
\;\, \longrightarrow \;\, 
\sum_{i=1}^d y'_i X_i,
\end{equation}
where $X_i,$ $i=1,2,\dots,d$, are species, and linear combinations 
$\sum_{i=1}^d y_i X_i$ and $\sum_{i=1}^d y'_i X_i$ of species are 
complexes. We interchangeably denote by 
${\mathbf y}=(y_1,y_2,\dots,y_d)$ a complex $\sum_{i=1}^d y_i X_i$. 
In the same way, we denote by ${\mathbf y}\to {\mathbf y}'$ the 
reaction~\eqref{eq:reaction}. We denote by $\Sp,$ $\Cx$, and 
$\Rx$ the collections of species, complexes, and reactions,
respectively, in the reaction network~$(\Sp,\Cx,\Rx)$.

\begin{example}
\label{example1}
The typical enzyme-substrate system can be described with a reaction network
\begin{align*}
X_1+X_2
\xrightleftarrows{\kappa_1}{\kappa_2}{4 mm}
X_3 \xrightarrow{\;\kappa_3\;} 
X_1+X_4,
\end{align*}
where the species $X_1,$ $X_2,$ $X_3$ and $X_4$ represent the enzyme, 
substrate, enzyme-substrate complex and product, respectively. For this 
system, we have~$\Sp=\{X_1, \, X_2, \, X_3, \, X_4 \}$, 
$\Cx=\{X_1 + X_2, \, X_3, \, X_1 + X_4\}$ and 
$\Rx=\{X_1 + X_2 \to X_3, \, X_3 \to X_1 + X_2, \, X_3 \to X_1 + X_4 \}$. 
Each reaction in $\Rx$ is associated with the corresponding rate constant 
$\kappa_1,$ $\kappa_2$ and $\kappa_3$.
\end{example}

\noindent
The time evolution of the concentration of species $X_i \in \Sp$ is described 
with a system of ODEs as
\begin{align*}
\frac{\mbox{d}{\mathbf x}}{\mbox{d}t}(t)
=\sum_{{\mathbf y}\to {\mathbf y}' \in \Rx } 
f_{{\mathbf y} \to {\mathbf y}'}({\mathbf x}(t)) 
({\mathbf y}'-{\mathbf y}),
\end{align*}
where $f_{{\mathbf y}\to {\mathbf y}'}$ are positive functions 
representing the weight of the reaction ${\mathbf y} \to {\mathbf y}'$ 
at each state. Considering \emph{mass-action kinetics}, we have
$$
f_{{\mathbf y}\to {\mathbf y}'}(\mathbf x)
=\kappa_{{\mathbf y}\to {\mathbf y}'}\vvec x^{\mathbf y},
$$
where ${\mathbf u}^{\mathbf v}=\prod_{i=1}^d u_i^{v_i}$ for 
vectors ${\mathbf u}$ and ${\mathbf v}$ with non-negative entries. 
The positive constant $\kappa_{{\mathbf y}\to {\mathbf y}'}$ forms 
the reaction rate for the reaction, and it constitutes one of the 
parameters of the reaction network. We include this reaction 
rate by placing it above the arrow of the associated
reaction ${\mathbf y}\to {\mathbf y}'$ as in Example~\ref{example1}.

\subsection{Stochastic description of reaction networks}

We model the number of molecules of each chemical species 
in a reaction network by a continuous-time Markov 
chain (CTMC) defined on the $d$-dimensional integer lattice 
\begin{equation}
\mathbb Z^d_{\ge 0}
=
\left\{\vvec x \in \mathbb Z^d
\; | \; x_i \ge 0 \mbox{ for } i = 1,\, 2, \, \dots, \, d 
\right\}.
\label{defZdlat}
\end{equation}
Denoting ${\mathbf X}(t) = [X_1(t), \, X_2(t), \, \dots, \, X_d(t)]$
the number of molecules in reaction network $(\Sp,\Cx,\Rx)$, 
the corresponding 
transition rates are defined as
\begin{equation*}
P({\mathbf X}(t+\Delta t)
=
\vvec x+ {\mathbf z} \ | \ {\mathbf X}(t)=\vvec x)
\, = \,
\sum_{\substack{{{\mathbf y}\to {\mathbf y}'}\in \Rx 
\\ 
{\mathbf y}'-{\mathbf y}\,=\,{\mathbf z}}}
\lambda_{{{\mathbf y}\to {\mathbf y}'}}(\vvec x) \, \Delta t 
+ o(\Delta t),
\end{equation*}
where $o(\Delta t) \to 0$, as $\Delta t \to 0$. 
We denote by 
$\cal Z
=
\{\vvec z=\vvec y'-\vvec y: \vvec y \to \vvec y'\in \Rx\}
$ 
the set of the transition vectors of the CTMC $\vvec X(t)$. 
The function $\lambda_{{\mathbf y}\to {\mathbf y}'} \ge 0$ 
is called the \emph{intensity} of reaction ${\mathbf y}\to {\mathbf y}'$ 
and it satisfies
\begin{equation}
\label{eq:standard property}
\lambda_{{{\mathbf y}\to {\mathbf y}'}}(\vvec x)>0 \quad 
\text{if and only if $\; x_i \ge y_i \;$ for each $i = 1,\,2,\,\dots,\,d$}.
\end{equation}
We say that a reaction ${\mathbf y}\to {\mathbf y}' \in \Rx$ is \emph{turned off} 
at $\vvec x$ if $\lambda_{{\mathbf y}\to {\mathbf y}'}(\vvec x)=0$. 
Otherwise we call a reaction ${\mathbf y}\to {\mathbf y}' \in \Rx$ 
is \emph{charged} at $\vvec x$. Using (stochastic) mass-action kinetics, we define, 
for each ${{\mathbf y}\to {\mathbf y}'}\in \Rx$  
\begin{equation}
\label{eq:sto mass}
\lambda_{{\mathbf y}\to {\mathbf y}'}(\vvec x)
=\kappa_{y \to y'}\vvec x^{(y)}, \quad 
\text{where} \; \; {\mathbf u}^{({\mathbf v})}
=
\prod_{i=1}^d u_i(u_i-1)\cdots (u_i-v_i+1)
\end{equation}
for vectors ${\mathbf u}, {\mathbf v} \in \mathbb Z^d_{\ge 0}$.

Let $\Ki=\{\lambda_{{\mathbf y}\to {\mathbf y}'} : 
{\mathbf y}\to {\mathbf y}' \in \Rx\}$ be the collection of given intensities 
for a reaction network $(\Sp,\Cx,\Rx)$. Then the associated CTMC is 
fully characterized by the four tuple  $(\Sp,\Cx,\Rx,\Ki)$. Furthermore, 
since $\Sp$ and $\Cx$ can be fully determined using $\Rx$, the 
reaction system is fully characterized with $\Rx$ and $\Ki$. 
So in the rest of the paper, we let  $(\Rx,\Ki)$ represent 
both a reaction network and the associated CTMC, and we call
$(\Rx,\Ki)$ a (stochastic) reaction system. 

A reaction network $(\Rx,\Ki)$ is a subnetwork 
of another reaction network  $(\Rx',\Ki')$ if $\Rx \subset \Rx'$ 
and $\lambda_{{\mathbf y}\to {\mathbf y}'}
\equiv
\lambda'_{{\mathbf y}\to {\mathbf y}'} \in \Ki$ 
for each ${\mathbf y}\to {\mathbf y}'\in \Rx$, where 
$\lambda_{{\mathbf y}\to {\mathbf y}'}$ and 
$\lambda'_{{\mathbf y}\to {\mathbf y}'}$ are the reaction 
intensities of $(\Rx,\Ki)$ and $(\Rx',\Ki')$, respectively. 
We denote this relation as $(\Rx,\Ki)\subset (\Rx',\Ki')$. 
If two systems $(\Rx,\Ki)$ and $(\Rx',\Ki')$ are identical, 
then $(\Rx,\Ki)\subseteq (\Rx',\Ki')$ and 
$(\Rx,\Ki)\subseteq (\Rx',\Ki')$, which we shortly denote 
by $(\Rx,\Ki)= (\Rx',\Ki')$.

\subsection{Reaction order and ordering for \texorpdfstring{$\mathbb{Z}^d_{\ge 0}$}{Lg}}

As indicated in Section \ref{subsec:crn}, we use vectors to represent complexes.
Hence for $\vvec y \in \ZZ^d_{\ge 0}$ and ${\mathbf z} \in \ZZ^d$ such 
that $\vvec y + {\mathbf z} \in \ZZ^d_{\ge 0}$, we denote by 
$\vvec y \to \vvec y + {\mathbf z}$ a reaction whose source complex is 
$\vvec y=\sum_{i=1}^d y_i X_i$ and the product complex is 
$\vvec y+ {\mathbf z}=\sum_{i=1}^d(y_i + z_i)X_i$.  
For example, for $\vvec y=(1,2)^\top$ and ${\mathbf z}=(-1,1)^\top$, 
the reaction $\vvec y\to \vvec y+{\mathbf z}$ represents $X_1+2X_2\to 3X_2$.
For $\vvec v\in \mathbb Z^d_{\ge 0}$ and an integer $N$, we define 
\begin{eqnarray}
\mathbb S_{N}
&=&
\{\vvec x \in \mathbb Z^d_{\ge 0}
\; | \; \vvec x \mbox{ satisfies } \Vert \vvec x \Vert_1\le N\},
\label{defSN}
\\
\mathbb S_{\vvec v, N}
&=&\{\vvec x \in \mathbb Z^d_{\ge 0}
\; | \; \vvec x \mbox{ satisfies } \vvec v \cdot \vvec x = N\},
\label{defSvN}
\end{eqnarray}
where $\cdot$ is the canonical inner product in the Euclidean space. 
Transition rates of a given CTMC on those sets will play a critical 
role in the main algorithm of this paper for inferring an underlying 
network structure. Given two vectors ${\mathbf u} \in \ZZ^d_{\ge 0}$
and ${\mathbf v} \in \ZZ^d_{\ge 0}$, we define the
\emph{lexicographical} ordering for $\ZZ^d_{\ge 0}$ by
\begin{equation}\label{eq:lexico ordering}
\vvec u \prec \vvec v \text{ if and only if there is $k$ such that 
$u_k < v_k$ and $u_i = v_i $ for all $i <k$}.
\end{equation}
In particular, the $d$-dimensional simplex $\mathbb S_{N}$ has $n$ elements
which we enumerate in the lexicographical order, that is,
\begin{equation}
\mathbb S_N=\{\vvec x^1, \, \vvec x^2, \, \dots, \, \vvec x^{n}\},
\quad
\mbox{where}
\quad
\vvec x^i \prec \vvec x^j 
\; \mbox{ if } \; i<j
\quad
\mbox{and}
\quad
n=\binom{N+d}{d}.
\label{elemSN}
\end{equation}
A reaction $\vvec y \to \vvec y'$ is of \emph{order} $N$ 
if $\Vert \vvec y \Vert_1=N$. 
A reaction system $(\Rx,\Ki)$ is of order $N$ if the order of all
reactions in $\Rx$ is at most $N$. A reaction $\vvec y\to \vvec y'$ 
is of \emph{$\vvec v$-order} $N$ if $\vvec v \cdot \vvec y=N$. 
A reaction system $(\Rx,\Ki)$ is of $\vvec v$-order $N$ if 
the $\vvec v$-order of all reactions in $\Rx$ is at most $N$.  
For example, the reaction system in 
Example~\ref{example1} is of order 2. However, if we use  
$\vvec v=(0,1,1,1)^\top$, then the reaction system in 
Example~\ref{example1} is of $\vvec v$-order 1. In general, 
if $\vvec v=(1,1,\dots,1)^\top$, the order and 
the $\vvec v-$order of a reaction are the same.

\section{Inference and identifiability of stochastic reaction systems}
\label{sec3}

Main results of this section are stated as
Theorems~\ref{thm:theoretical network inference}, 
\ref{thm:identify order N} and \ref{thm:non unique on a hyper}.

\subsection{Network inference using the transition rates}
\label{subsec:main}

Our goal is to construct a reaction system $(\Rx,\Ki)$ for given transition 
rates of a CTMC. First, we show that the knowledge of transition rates on
a sufficiently large part of the state space uniquely determines the 
underlying reaction system. 

\begin{lemma}\label{lem:same order systems are same}
Let $(\Rx,\Ki)$ and $(\overline{\Rx},\overline{\Ki})$ be two reaction systems 
of order $N_1$ and $N_2$, respectively. Suppose that there exists
$N \ge \max\{N_1,N_2\}$ such that the two mass-action 
stochastic models associated with $(\Rx,\Ki)$ and $(\overline{\Rx},\overline{\Ki})$ 
have the same transition rates on~$\mathbb S_{N}$. 
Then $(\Rx,\Ki)=(\overline{\Rx},\overline{\Ki})$.
\end{lemma}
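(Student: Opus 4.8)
The plan is to separate the stochastic dynamics into contributions indexed by transition vectors, and then reduce the equality of transition rates to a statement about linear independence of falling-factorial polynomials. First I would fix an arbitrary $\vvec z \in \ZZ^d$ and collect, in each system, all reactions sharing this transition vector. Since a reaction with source $\vvec y$ and transition vector $\vvec z$ necessarily has product $\vvec y + \vvec z$, such a reaction is determined by its source complex alone, so within a fixed $\vvec z$ the reactions are indexed by distinct source complexes $\vvec y$. By the mass-action definition~\eqref{eq:sto mass}, the total transition rate from $\vvec x$ to $\vvec x + \vvec z$ is
$$q_{\vvec z}(\vvec x) = \sum_{\vvec y \,:\, \vvec y \to \vvec y + \vvec z \,\in\, \Rx} \kappa_{\vvec y \to \vvec y + \vvec z}\, \vvec x^{(\vvec y)},$$
and analogously $\overline{q}_{\vvec z}$ for $(\overline{\Rx}, \overline{\Ki})$. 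The hypothesis gives $q_{\vvec z}(\vvec x) = \overline{q}_{\vvec z}(\vvec x)$ for every $\vvec x \in \mathbb{S}_N$ and every $\vvec z$; note that all source complexes appearing here satisfy $\Vert \vvec y \Vert_1 \le N$ because both systems have order at most $N$.

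The crux is the following claim, which I would isolate as the key step: the falling-factorial functions $\{\vvec x \mapsto \vvec x^{(\vvec y)} : \vvec y \in \ZZ^d_{\ge 0}, \ \Vert \vvec y \Vert_1 \le N\}$ are linearly independent when restricted to the domain $\mathbb{S}_N$. Granting this, the relation $q_{\vvec z} \equiv \overline{q}_{\vvec z}$ on $\mathbb{S}_N$ rewrites as a vanishing linear combination of these functions whose coefficient at $\vvec y$ is the signed difference of the two rate constants, with the convention that an absent reaction contributes coefficient $0$. Linear independence forces every such coefficient to vanish. Since all rate constants are strictly positive, a reaction $\vvec y \to \vvec y + \vvec z$ belongs to $\Rx$ if and only if it belongs to $\overline{\Rx}$, and in that case $\kappa_{\vvec y \to \vvec y + \vvec z} = \overline{\kappa}_{\vvec y \to \vvec y + \vvec z}$. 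Ranging over all $\vvec z$ yields $\Rx = \overline{\Rx}$ and $\Ki = \overline{\Ki}$, that is, $(\Rx,\Ki) = (\overline{\Rx},\overline{\Ki})$.

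To prove the linear independence I would exploit the triangular structure of falling factorials with respect to the componentwise partial order. The elementary facts I would use are that $\vvec a^{(\vvec y)} = 0$ unless $a_i \ge y_i$ for every $i$ (so unless $\vvec y \le \vvec a$ componentwise), together with $\vvec y^{(\vvec y)} = \prod_{i=1}^d (y_i!) \ne 0$. Suppose $\sum_{\vvec y} c_{\vvec y}\, \vvec x^{(\vvec y)} = 0$ on $\mathbb{S}_N$ with some $c_{\vvec y} \ne 0$, and let $\vvec y^\ast$ be minimal with respect to the componentwise order among the finitely many $\vvec y$ with $c_{\vvec y} \ne 0$. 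Evaluating at $\vvec x = \vvec y^\ast \in \mathbb{S}_N$, every term with $c_{\vvec y} \ne 0$ and $\vvec y \ne \vvec y^\ast$ vanishes, since such a $\vvec y$ cannot satisfy $\vvec y \le \vvec y^\ast$ by minimality; this leaves $c_{\vvec y^\ast}\prod_i (y^\ast_i!) = 0$ and hence $c_{\vvec y^\ast} = 0$, a contradiction.

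The main obstacle is conceptual rather than computational: distinct reactions sharing a transition vector are aggregated into a single observable transition rate, so a priori one cannot read off individual reactions from the dynamics. The resolution is precisely the observation that such reactions are separated by their source complexes, and that the mass-action intensity attaches to each source complex the falling factorial $\vvec x^{(\vvec y)}$; the linear independence step is what guarantees these overlapping contributions can be disentangled. I would also take care that the independence is asserted over the finite domain $\mathbb{S}_N$ rather than all of $\ZZ^d_{\ge 0}$, which is exactly why the hypothesis $N \ge \max\{N_1, N_2\}$ is needed: it ensures that every relevant source complex $\vvec y^\ast$ itself lies in $\mathbb{S}_N$, so the evaluation used above is legitimate.
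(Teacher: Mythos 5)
Your proof is correct and follows essentially the same route as the paper's: both arguments rest on the triangular structure of the mass-action intensities, namely that $\vvec x^{(\vvec y)}$ vanishes unless $\vvec y \le \vvec x$ componentwise while $\vvec y^{(\vvec y)} = \prod_{i} y_i! \neq 0$, so that evaluating at an extremal source complex (which lies in $\mathbb S_N$ because both orders are at most $N$) isolates a single reaction's contribution. The only difference is packaging: the paper locates the first disagreeing reaction in the lexicographic enumeration of $\mathbb S_N$ and derives the contradiction at that state, whereas you phrase the same mechanism as an explicit linear-independence statement for the falling factorials on $\mathbb S_N$ and evaluate at a componentwise-minimal source with nonzero coefficient difference --- a slightly more self-contained formulation of the identical idea.
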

\begin{proof}
Let ${\mathbf X}(t)$ and $\overline{\mathbf X}(t)$ be the CTMCs obtained 
by using stochastic mass-action description of $(\Rx,\Ki)$ and 
$(\overline{\Rx},\overline{\Ki})$, respectively.
We denote by $\lambda_{\vvec y\to \vvec y'}$ and $\overline \lambda_{\vvec y\to \vvec y'}$ 
the transition rates of reactions $\vvec y\to \vvec y'$ associated with 
$(\Rx,\Ki)$ and $(\overline{\Rx},\overline{\Ki})$, respectively. 
We denote states in 
$\mathbb S_N$ by (\ref{elemSN}). To prove the lemma by contradiction, we suppose 
that $(\Rx,\Ki) \neq (\overline{\Rx},\overline{\Ki})$. Since the order of each reaction 
$\vvec y\to \vvec y'$ in $\Rx \cup \overline{\Rx}$ is less than or equal to $N$, 
it can be represented as $\vvec x^k \to \vvec x^k+{\mathbf z}$ for some 
transition vector ${\mathbf z}$. Since 
$(\Rx,\Ki)\neq (\overline{\Rx},\overline{\Ki})$, there 
exists a transition vector ${\mathbf z}$ such that reaction 
$\vvec x^j\to \vvec x^j+{\mathbf z}$ is the first reaction (in the lexicographical 
ordering) which is formulated differently in reaction systems $(\Rx,\Ki)$ 
and $(\overline{\Rx},\overline{\Ki})$. In other words, we have
$\vvec x^i\to \vvec x^i+{\mathbf z} \in \Rx \cap \overline{\Rx}$ 
and $\lambda_{\vvec x^i \to \vvec x^i+{\mathbf z}}
\equiv
\overline \lambda_{\vvec x^i\to \vvec x^i+{\mathbf z}}$ for each $i<j$.
Then at $\vvec x^j \in \mathbb S_N$, the transition rate for ${\mathbf z}$ 
of the two systems are different, which is a contradiction to the assumption 
that both stochastic systems share the same transition rates on $\mathbb S_N$.
\end{proof}

\noindent Our main result is formulated as 
Theorem~\ref{thm:theoretical network inference} below, but before we state this 
theorem, we begin with a simple example illustrated in Figure~\ref{fig:example0}. 

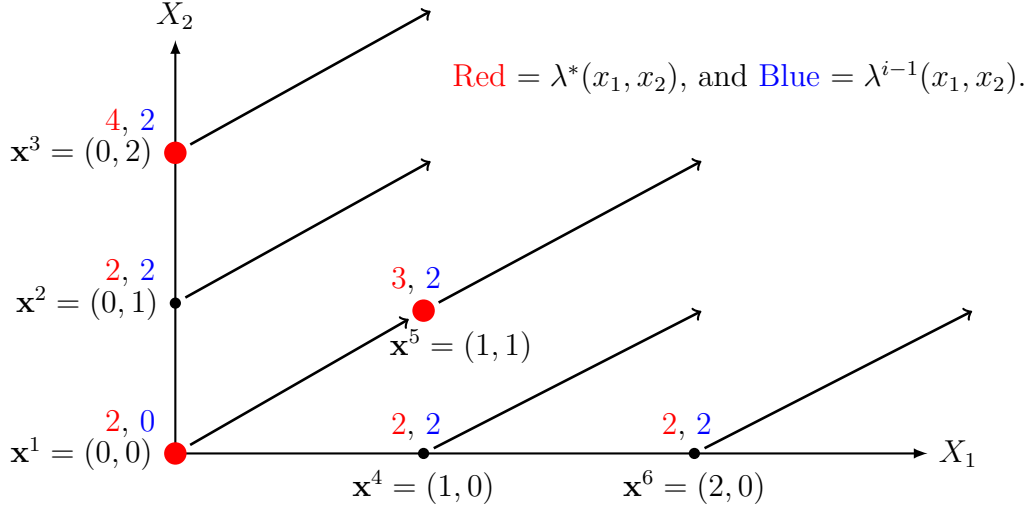
\begin{figure}[t]
\begin{tikzpicture}
\draw[thick,-latex] (-2,0) -- (8,0)node[right]{$X_1$};
\draw[thick,-latex] (-2,0) -- (-2,5.5)node[above]{$X_2$};
\node[fill,circle,red,inner sep=3pt,label=left:{${\mathbf x}^1=(0,0)$}] (0) 
at (-2,0) {};
\node at (-2.6,0.4) {\textcolor{red}{$2$}, \textcolor{blue}{$0$}};
\node[fill,circle,inner sep=1.5pt,label=left:{${\mathbf x}^2=(0,1)$}] (1)
at (-2,2) {};
\node at (-2.6,2.4) {\textcolor{red}{$2$}, \textcolor{blue}{$2$}};
\node[fill,circle,red,inner sep=3pt,label=left:{${\mathbf x}^3=(0,2)$}] (2) 
at (-2,4) {};
\node at (-2.6,4.4) {\textcolor{red}{$4$}, \textcolor{blue}{$2$}};
\node[fill,circle,inner sep=1.5pt,label=below:{${\mathbf x}^4=(1,0)$}] (3)
at (1.3,0) {};
\node at (1.2,0.35) {\textcolor{red}{$2$}, \textcolor{blue}{$2$}};   
\node[fill,circle,inner sep=1.5pt,label=below:{${\mathbf x}^6=(2,0)$}] (5)
at (4.9,0) {};
\node at (4.8,0.35) {\textcolor{red}{$2$}, \textcolor{blue}{$2$}};
\node[fill,circle,red,inner sep=3pt,
label=below right:{$\!\!\!\!\!\!\!\!\!\!{\mathbf x}^5=(1,1)$}] (4) at
(1.3,1.9) {};
\node at (1.2,2.3) {\textcolor{red}{$3$}, \textcolor{blue}{$2$}};
\node at (5.5,5) {\textcolor{red}{Red} 
$=\lambda^*(x_1,x_2)$, 
and \textcolor{blue}{Blue}  
$=\lambda^{i-1}(x_1,x_2)$.};
\node (17) at (5.2,2) { };
\node (18) at (8.8,2) { };
\node (19) at (1.6,4) { };
\node (20) at (5.2,4) { };
\node (21) at (1.6,6) { };
\path[shorten >=2pt,->,shorten <=2pt]
(0) edge[line width=1pt] node {} (4)
(1) edge[line width=1pt] node {} (19)
(2) edge[line width=1pt] node {} (21)
(3) edge[line width=1pt] node {} (17)
(4) edge[line width=1pt] node {} (20)
(5) edge[line width=1pt] node {} (18);
\end{tikzpicture}
\caption{{\it The procedure of inferring the underlying reaction system in
Example~$\ref{ex:main ex}$. 
The red value at each state is the given transition rate associated 
with the transition vector~${\mathbf z}=(1,1)^\top$, indicated by black arrows. 
The blue is the value of $\lambda^{i-1}$ updated at the previous state. Red 
dots indicate the states where $\lambda^*(x_1,x_2)-\lambda^{i-1}(x_1,x_2)>0$.}}
\label{fig:example0}        
\end{figure}

\begin{example}
\label{ex:main ex}
Consider $d=2$ and assume that the CTMC has a single transition 
vector ${\mathbf z}=(1,1)^T$. Suppose that we are given data on
transition rates $\lambda^*({\mathbf x})$ of a CTMC defined 
on $\ZZ^2_{\ge 0}$ as the red numbers indicated in
Figure~$\ref{fig:example0}$. To construct the reaction network,
we use $\mathbb S_2=\{\vvec x^1,\vvec x^2,\dots, \vvec x^6\}$ defined 
by $(\ref{defSN})$ and $(\ref{elemSN})$, i.e.
$$
\vvec x^1 = (0,0),
\quad
\vvec x^2 = (0,1),
\quad 
\vvec x^3 = (0,2),
\quad
\vvec x^4 = (1,0),
\quad 
\vvec x^5 = (1,1),
\quad
\vvec x^6 = (2,0).
$$
Let $\lambda^0\equiv 0$, $\Rx^0=\emptyset$ and $\Ki^0=\emptyset$. We iteratively 
calculate $\lambda^i$, $\Rx^i$ and $\Ki^i$ using given information at $\vvec x^i$, 
for $i=1,2,\dots,6.$ The outcome of this procedure is the transition rate 
function $\lambda= \lambda^6$, a set of reactions 
$\Rx=\Rx^6$, and a kinetic set $\Ki=\Ki^6$ such that 
\begin{align}
\sum_{{\mathbf y} \to {\mathbf y}' \in \Rx}
\lambda_{{\mathbf y} \to {\mathbf y}}(\vvec x)
=\lambda(\vvec x)=\lambda^*(\vvec x) \quad 
\text{for each $\vvec x\in \mathbb S_2$}.    
\end{align}
Since $\lambda^*(\vvec x^1)-\lambda^0(\vvec x^1) =2>0$ at $\vvec x^1=(0,0)$, 
the reaction $\emptyset \to X_1+X_2$ must be included in $\Rx$ with the reaction 
intensity $\lambda_{\emptyset\to X_1+X_2}({\mathbf x})=2$. So we let 
$$
\lambda^1({\mathbf x}) \equiv 2,
\qquad
\Rx^1=\left\{\emptyset \to X_1+X_2\right\}
\qquad
\mbox{and}
\qquad
\Ki^1=\left\{\lambda_{\emptyset\to X_1+X_2}({\mathbf x})=2\right\}.
$$
At the next state $\vvec x^2=(0,1)$, we have 
$\lambda^*(\vvec x^2)-\lambda^1(\vvec x^2)=0$, hence no additional reaction needs 
to be included in $\Rx$. Hence we put $\lambda^2 = \lambda^1,$
$\Rx^2 = \Rx^1$ and $\Ki^2=\Ki^1$. Since 
$\lambda^*(\vvec x^3)-\lambda^2(\vvec x^3) =2>0$ at $\vvec x^3=(0,2)$, 
the reaction $2 X_2 \to X_1 + 3 X_2$ must be included in $\Rx$ with the reaction 
intensity $\lambda_{2 X_2 \to X_1 + 3 X_2}({\mathbf x})= x_2 (x_2 - 1 )$. So we let 
$$
\lambda^3({\mathbf x}) = 2 + x_2 (x_2 - 1 ),
\qquad
\Rx^3=\left\{\emptyset \to X_1+X_2, \; 2 X_2 \to X_1 + 3 X_2 \right\}
$$
and
$
\Ki^3=\left\{\lambda_{\emptyset\to X_1+X_2}({\mathbf x})=2, \;
\lambda_{2X_2 \to X_1 + 3 X_2}({\mathbf x})=  x_2 (x_2 - 1 )\right\}.
$
We iterate this procedure until the last state $\vvec x^6=(2,0) \in \mathbb S_{2}$ as shown in Figure~$\ref{fig:example0}$. Then the outcome $(\Rx,\Ki)$ is the following 
reaction system 
\begin{align*}
\emptyset
\,\mathop{\longrightarrow}^{2}\,
X_1+X_2, \qquad 
2X_2 \,
\mathop{\longrightarrow}^1 \,
X_1+3X_2, \qquad 
X_1+X_2
\mathop{\longrightarrow}^1 
2X_1+2X_2, 
\end{align*} and the transition rate in the direction ${\mathbf z}=(1,1)^\top$ is  
\begin{eqnarray*}
\lambda({\mathbf x})
&=&
\lambda_{\emptyset\to X_1+X_2}({\mathbf x})
+
\lambda_{2X_2\to X_1+3X_2}({\mathbf x})
+
\lambda_{X_1+X_2\to 2X_1+2X_2}({\mathbf x})
\\
&=&
2+x_2(x_2-1)+x_1x_2.
\end{eqnarray*}
\end{example}

\noindent
We have observed in Example~\ref{ex:main ex} and 
Lemma~\ref{lem:same order systems are same} that a mass-action system of order 
$N=2$ can be characterized with the transition rates on $\mathbb S_N$. 
Next, we generalize this observation with a simple algorithm. 
Using the lexicographical order (\ref{elemSN}) of $\mathbb S_N$, 
for a given transition vector $\vvec z$ and the associated transition rate
$\lambda^*_{\vvec z}$ we iteratively define $\lambda^0_{\vvec z}\equiv 0$ and 
\begin{equation}
\label{eq:condition for the algorithm}
\lambda^i_\vvec z(\vvec x)
=
\lambda^{i-1}_\vvec z(\vvec x)+c_\vvec z^{i} \, {\vvec x}^{(\vvec x^i)}, 
\quad \text{for $i=1,2,\dots,n$, where} 
\quad
c_\vvec z^{i}
=
\dfrac{\lambda^*_{\vvec z}(\vvec x^i)
-
\lambda_\vvec z^{i-1}(\vvec x^i)}{{\vvec x^i}^{(\vvec x^i)}}.
\end{equation}
Note that 
$
\lambda^n_\vvec z (\vvec x)
=
\sum_{i\le n} c^i_\vvec z \, \vvec x^{(\vvec x^i)}
$, 
and the term $c^i_\vvec z \, \vvec x^{(\vvec x^i)}$ can 
be associated with the mass-action intensity of a reaction 
$\vvec x^i \to \vvec x^i+\vvec z$ as long as $c^i_\vvec z \ge 0$. 
Hence if $c^i_\vvec z \ge 0$ for each $i$, we can find 
a mass-action system that has the same transition rates 
as $\lambda^*_\vvec z$.

\begin{theorem}
\label{thm:theoretical network inference}
Let ${\mathbf X}(t)$ be a {\rm CTMC} defined on the state 
space $\mathbb Z^d_{\ge 0}$ with 
the transition rate $\lambda^*_{\mathbf z} : \mathbb Z^d_{\ge 0} \to [0,\infty)$ 
for each transition vector ${\mathbf z} \in {\cal Z} \subset \mathbb Z^d$, where
$|{\cal Z}| < \infty$. Suppose that the constant $c_\vvec z^{i}$ 
in $(\ref{eq:condition for the algorithm})$ is nonnegative
for each $\vvec z\in \cal Z$ and  $\vvec x^i \in \mathbb S_N$, 
for $i=1,2,\dots,n$, where we use notation~$(\ref{elemSN})$. 
Then for each integer $N>0$, there exists unique 
mass-action reaction system  $(\Rx,\Ki)$ such that \vskip 1mm
{\rm (i)} the order of the reaction system $(\Rx,\Ki)$ is less than or 
equal to $N$, and \vskip 1mm
{\rm (ii)} for each transition vector ${\mathbf z} \in {\cal Z}$, 
if $\lambda^*_{\mathbf z}(\vvec x')>0$ for some $\vvec x'\in \mathbb S_N$, 
then 
\begin{equation*}
\lambda^*_{\mathbf z}(\vvec x)
=\sum_{\substack{\vvec y\to \vvec y'\in \Rx \\
\vvec y'-\vvec y \, = \, {\mathbf z}}}\lambda_{\vvec y\to \vvec y'}(\vvec x) 
\quad \text{for all $\vvec x \in \mathbb S_N$},    
\end{equation*}
\hskip 5mm where $\lambda_{\vvec y \to \vvec y'}$ is the reaction intensity 
of $\vvec y \to \vvec y'\in \Rx$. 
\end{theorem}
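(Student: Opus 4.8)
The plan is to establish existence by turning the iteration~(\ref{eq:condition for the algorithm}) into an explicit reaction system, and to deduce uniqueness directly from Lemma~\ref{lem:same order systems are same}. Throughout I fix $N$ and work with the lexicographic enumeration $\mathbb S_N=\{\vvec x^1,\dots,\vvec x^n\}$ of~(\ref{elemSN}).

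For existence, I would define $\Rx$ and $\Ki$ as follows: for every $\vvec z\in\cal Z$ and every index $i$ with $c^i_{\vvec z}>0$, include the reaction $\vvec x^i\to\vvec x^i+\vvec z$ with rate constant $\kappa_{\vvec x^i\to \vvec x^i+\vvec z}=c^i_{\vvec z}$, so that by~(\ref{eq:sto mass}) its mass-action intensity is exactly $c^i_{\vvec z}\,\vvec x^{(\vvec x^i)}$. Property~(i) is then immediate, since every source complex $\vvec x^i$ lies in $\mathbb S_N$ and hence satisfies $\Vert \vvec x^i\Vert_1\le N$. The hypothesis $c^i_{\vvec z}\ge 0$ is precisely what makes each $c^i_{\vvec z}$ admissible as a (stochastic) mass-action rate constant, so the construction never requires a negative rate.

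The core of the matching claim~(ii) is a triangularity property of the falling-factorial monomials relative to the lexicographic order: if $\vvec x^j\prec\vvec x^i$ (equivalently $j<i$) then $(\vvec x^j)^{(\vvec x^i)}=0$, whereas $(\vvec x^i)^{(\vvec x^i)}=\prod_{m=1}^d x^i_m!>0$. I would prove this by locating the first coordinate $k$ with $x^j_k<x^i_k$; the $k$-th factor of $(\vvec x^j)^{(\vvec x^i)}$ is the length-$x^i_k$ falling factorial $x^j_k(x^j_k-1)\cdots(x^j_k-x^i_k+1)$, which runs through $0$ and therefore vanishes. Granting this, an induction on $i$ shows that the term $c^i_{\vvec z}\,\vvec x^{(\vvec x^i)}$ added at step $i$ leaves the values at $\vvec x^1,\dots,\vvec x^{i-1}$ unchanged, while the choice of $c^i_{\vvec z}$ in~(\ref{eq:condition for the algorithm}) forces $\lambda^i_{\vvec z}(\vvec x^i)=\lambda^*_{\vvec z}(\vvec x^i)$. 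Hence $\lambda^n_{\vvec z}\equiv\lambda^*_{\vvec z}$ on $\mathbb S_N$ for every $\vvec z\in\cal Z$, and since $\lambda^n_{\vvec z}(\vvec x)=\sum_{i}c^i_{\vvec z}\,\vvec x^{(\vvec x^i)}$ is exactly the sum of the intensities of the constructed reactions with direction $\vvec z$, this is statement~(ii).

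For uniqueness, I would note that any mass-action system satisfying~(i) and~(ii) has order at most $N$ and, by~(ii), realizes the prescribed transition rate $\lambda^*_{\vvec z}$ in each direction $\vvec z$ at every state of $\mathbb S_N$; thus any two such systems share all their transition rates on $\mathbb S_N$. Lemma~\ref{lem:same order systems are same}, applied with $N_1,N_2\le N$, then yields that the two systems coincide. I expect the main obstacle to be the triangularity step: one must check that the lexicographic order is exactly the order in which the monomials $\vvec x^{(\vvec x^i)}$ become active, so that the linear system relating the unknowns $c^i_{\vvec z}$ to the data $\lambda^*_{\vvec z}(\vvec x^i)$ is lower triangular with nonzero diagonal entries $(\vvec x^i)^{(\vvec x^i)}$, which is what makes the iteration simultaneously well defined and non-interfering.
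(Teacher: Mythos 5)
Your proposal is correct and follows essentially the same route as the paper: the same construction of $\Rx$ and $\Ki$ from the indices with $c^i_{\vvec z}>0$, the same triangularity observation that $(\vvec x^k)^{(\vvec x^j)}=0$ for $k<j$ in the lexicographic order (which is exactly the paper's equation~\eqref{eq:drop k prec terms}), and uniqueness via Lemma~\ref{lem:same order systems are same}. Your explicit justification of why the diagonal entries $(\vvec x^i)^{(\vvec x^i)}=\prod_m x^i_m!$ are nonzero is a slightly more careful statement of a step the paper leaves implicit, but the argument is the same.
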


\begin{proof} The uniqueness of $(\Rx,\Ki)$ follows from 
Lemma \ref{lem:same order systems are same}. To prove existence, we 
denote states in $\mathbb S_N$ by (\ref{elemSN}).
We fix ${\mathbf z} \in {\cal Z}$, and let $\lambda^*_{\mathbf z}$ be the associated 
transition rate function of $\vvec X(t)$. Then let 
\begin{align}
\begin{split}\label{eq: R and K}
&\Rx^\vvec z
=
\left\{
\vvec x^i \to \vvec x^i+\vvec z \;\;\big|\; \mbox{ where } i 
\mbox{ satisfies } c_\vvec z^{i} > 0
\right\}, \text{ and}\\
&\Ki^\vvec z
=
\left\{\lambda_{\vvec x^i\to \vvec x^i+\vvec z}(\vvec x) =
c_\vvec z^{i} \, \vvec x^{(\vvec x^i)} \;\Big|\;\, 
\vvec x^i\to \vvec x^i+\vvec z \in \Rx^\vvec z \right\}.
\end{split}
\end{align}
Then we prove that $\lambda^*_{\vvec z}(\vvec x^k)=\lambda_{\vvec z}^n(\vvec x^k)$ 
for each $\vvec x^k\in \mathbb S_N$, where $\lambda_{\vvec z}^n(\vvec x^k)$ is given
by (\ref{eq:condition for the algorithm}) and $n$ is given by (\ref{elemSN}).
Note that for any $k< j$, there is an $i$ such that 
$x^k_i\le  x^j_i$ so that ${\vvec x^k}^{(\vvec x^j)}=0$. Hence
\begin{align}
\label{eq:drop k prec terms}
\lambda_{\vvec z}^n(\vvec x^k)
=
\lambda_{\vvec z}^k(\vvec x^k)
+
\sum_{j=k+1}^n c_\vvec z^{j} \, {\vvec x^k}^{(\vvec x^j)}
=\lambda_{\vvec z}^k(\vvec x^k).
\end{align}
Therefore, for each $k$
$$
\lambda^*_{\vvec z}(\vvec x^{k})
= 
c_\vvec z^{k} \, {\vvec x^{k}}^{(\vvec x^{k})}
+
\lambda_{\vvec z}^{k-1}(\vvec x^{k})
=
\lambda_{\vvec z}^{k}(\vvec x^{k})
=
\lambda_{\vvec z}^{n}(\vvec x^{k}),
$$
where the last equality follows by \eqref{eq:drop k prec terms}. Repeating 
construction \eqref{eq: R and K} for each transition vector 
${\mathbf z} \in {\mathcal Z}$, we put
$$
\Rx = \bigcup_{{\mathbf z} \in {\cal Z}} \Rx^{\mathbf z},
\qquad
\mbox{and}
\qquad
\Ki = \bigcup_{{\mathbf z} \in {\cal Z}} \Ki^{\mathbf z},
$$
By the construction, for each transition vector ${\mathbf z} \in {\mathcal Z}$, 
we have 
$$
\lambda^{*}_{\mathbf z}(\vvec x)
= 
\lambda^{n}_{\mathbf z}(\vvec x)
=
\sum_{\substack{{\mathbf y}\to {\mathbf y}'\in \Rx \\ 
{\mathbf y}\to {\mathbf y}' \, = \, {\mathbf z}}}
\lambda_{{\mathbf y}\to {\mathbf y}'}(\vvec x),
$$
for each $\vvec x \in \mathbb S_N$, where $\lambda_{{\mathbf y}\to {\mathbf y}'}$ 
is the intensity of a reaction ${\mathbf y}\to {\mathbf y}'$ in $(\Rx,\Ki)$. 
The order of $(\Rx,\Ki)$ is less than or equal to $N$ since the order 
of each reaction in $\Rx$ is less than or equal to $N$.
\end{proof}

\begin{remark}
The advantage of Theorem~$\ref{thm:theoretical network inference}$ is that we do 
not require any algebraic structure on $\mathbb S_N$. Since the mass-action 
intensity of a reaction is a polynomial, transition rates on an arbitrary set 
$A$ can be used to infer the underlying reaction network and parameters by 
using a canonical polynomial fitting approach. To do that, however, certain 
algebraic structure on $A$ is required. More details about network inference 
with polynomial fitting are provided in Section~$\ref{sec:polynomial}$.
\end{remark}

\begin{remark}
If the transition rates $\lambda^*_{\vvec z}$ of a given CTMC $\vvec X(t)$ 
are given by an order $N$ mass-action system, then $c_{\vvec z}^i \ge 0$ 
for each transition vector ${\vvec z}$ and $i=1,2,\dots,n,$ and we can 
uncover the underlying reaction network uniquely by the algorithm
illustrated in Figure~$\ref{fig:example0}$.
\end{remark}

\subsection{Identifiability of continuous time Markov chains}

For a CTMC associated with a given reaction system, one of the main 
questions is identifiability of the underlying reaction system by using 
the information on the CTMC. We formalize this idea more rigorously.

\begin{definition}
\label{def:identify}
For a CTMC ${\mathbf X}(t)$ with the state space $\mathbb S$, 
the CTMC ${\mathbf X}(t)$ is \emph{identifiable} if there is 
a unique reaction system $(\Rx,\Ki)$ such that 
\begin{enumerate}
\item each ${\mathbf y}\to {\mathbf y}' \in \Rx$ is charged in at least one 
state $\vvec x \in \mathbb S$,
\item the state space of the CTMC associated with $(\Rx,\Ki)$ 
contains $\mathbb S$, and
\item the associated mass-action CTMC with $(\Rx,\Ki)$ admits the same transition 
rates on $\mathbb S$ as ${\mathbf X}(t)$ admits. 
\end{enumerate}
Otherwise, ${\mathbf X}(t)$  is not identifiable with a reaction system.
\end{definition}

\noindent
For a CTMC ${\mathbf X}(t)$ associated with an order $N$ reaction system, 
the uniqueness of Theorem~\ref{thm:theoretical network inference} implies 
that ${\mathbf X}(t)$ is identifiable as long as enough information on the 
transition rates of ${\mathbf X}(t)$ is ensured. We begin with a lemma 
for identifiability of reaction systems.

\begin{lemma}
\label{lem:relation bewteen order N1 and N2}
Let ${\mathbf X}_1(t)$ and ${\mathbf X}_2(t)$ be two $d$-dimensional 
{\rm CTMC}s associated with mass-action systems $(\Rx_1,\Ki_1)$ and $(\Rx_2,\Ki_2)$ 
of order $N_1$ and $N_2$, respectively. Suppose that $N_1 > N_2.$
Suppose further that ${\mathbf X}_1(t)$ and ${\mathbf X}_2(t)$
have the same transition rates at each state $\vvec x \in \mathbb S_{N_2}.$ 
Then $(\Rx_2,\Ki_2) \subset (\Rx_1,\Ki_1)$.
\end{lemma}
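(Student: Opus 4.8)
The plan is to compare $(\Rx_2,\Ki_2)$ not directly with the full system $(\Rx_1,\Ki_1)$, but with a truncation of the latter, and then to invoke Lemma~\ref{lem:same order systems are same}. First I would define $(\Rx_1^{\le N_2},\Ki_1^{\le N_2})$ to be the subnetwork of $(\Rx_1,\Ki_1)$ consisting of all reactions $\vvec y\to \vvec y'\in\Rx_1$ whose source complex satisfies $\Vert\vvec y\Vert_1\le N_2$, equipped with the inherited intensities. By construction this is a reaction system of order at most $N_2$, and $(\Rx_1^{\le N_2},\Ki_1^{\le N_2})\subset(\Rx_1,\Ki_1)$ in the sense of the subnetwork relation.

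The key observation, which is the only real content of the argument, is that the reactions discarded in forming this truncation contribute nothing on $\mathbb{S}_{N_2}$. Indeed, if $\Vert\vvec y\Vert_1>N_2$ and $\vvec x\in\mathbb{S}_{N_2}$, then the falling factorial $\vvec x^{(\vvec y)}=\prod_{i=1}^d x_i(x_i-1)\cdots(x_i-y_i+1)$ can be nonzero only when $x_i\ge y_i$ for every $i$, which would force $\Vert\vvec x\Vert_1\ge\Vert\vvec y\Vert_1>N_2$, contradicting $\Vert\vvec x\Vert_1\le N_2$. Hence $\vvec x^{(\vvec y)}=0$ on $\mathbb{S}_{N_2}$ for every source complex of order larger than $N_2$, so the mass-action intensity of each discarded reaction vanishes identically on $\mathbb{S}_{N_2}$. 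Consequently $(\Rx_1^{\le N_2},\Ki_1^{\le N_2})$ and the full system $(\Rx_1,\Ki_1)$ have exactly the same transition rates at every state of $\mathbb{S}_{N_2}$, for each transition vector $\vvec z$. This is precisely the triangularity already used in the proof of Theorem~\ref{thm:theoretical network inference}.

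Combining this with the hypothesis that $\mathbf{X}_1(t)$ and $\mathbf{X}_2(t)$ share the same transition rates on $\mathbb{S}_{N_2}$, the two systems $(\Rx_1^{\le N_2},\Ki_1^{\le N_2})$ and $(\Rx_2,\Ki_2)$ then have identical transition rates on $\mathbb{S}_{N_2}$. Both are of order at most $N_2$, so applying Lemma~\ref{lem:same order systems are same} with $N=N_2\ge\max\{N_2,N_2\}$ yields $(\Rx_1^{\le N_2},\Ki_1^{\le N_2})=(\Rx_2,\Ki_2)$. Since the left-hand side is a subnetwork of $(\Rx_1,\Ki_1)$, I conclude $(\Rx_2,\Ki_2)\subset(\Rx_1,\Ki_1)$, as claimed.

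I do not expect a genuine obstacle here: once the vanishing property of $\vvec x^{(\vvec y)}$ on the smaller simplex is recorded, the statement reduces to the uniqueness already established in Lemma~\ref{lem:same order systems are same}. The one point requiring a little care is to phrase the comparison uniformly across transition vectors, so that the possibly different transition-vector sets $\mathcal{Z}_1$ and $\mathcal{Z}_2$ of the two systems are handled at once by reading $\lambda_{\vvec z}\equiv 0$ whenever a given $\vvec z$ is absent from a system; the truncation argument and the lemma then apply verbatim. Note that the strict inequality $N_1>N_2$ is not essential to the argument (the case $N_1=N_2$ would simply give equality of the two systems), but it is the setting in which the conclusion is nontrivial.
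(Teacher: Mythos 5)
Your proof is correct and follows essentially the same route as the paper's: produce an order-$\le N_2$ subsystem of $(\Rx_1,\Ki_1)$ that reproduces the transition rates on $\mathbb{S}_{N_2}$, then invoke the uniqueness in Lemma~\ref{lem:same order systems are same} to identify that subsystem with $(\Rx_2,\Ki_2)$. The only difference is cosmetic: the paper obtains the subsystem by running the algorithm of Theorem~\ref{thm:theoretical network inference} on the rates of $\mathbf{X}_1(t)$ over $\mathbb{S}_{N_2}$, whereas you define the truncation directly and check by hand that the discarded higher-order reactions have vanishing intensity on $\mathbb{S}_{N_2}$ --- arguably a cleaner path, since it avoids appealing to that theorem's nonnegativity hypothesis.
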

\begin{proof}
We apply Theorem~\ref{thm:theoretical network inference} to the transition rates 
of ${\mathbf X}_1(t)$ on $\mathbb S_{N_2}$ to identify a unique order $N'$ 
reaction system $(\Rx', \Ki')$ such that $N'\le N_2$ and the associated 
CTMC under mass-action kinetics has the same transition rates on~$\mathbb S_{N_2}$ 
as the transition rates of ${\mathbf X}_1(t)$. Then by the construction in the 
proof of Theorem~\ref{thm:theoretical network inference}, reaction
$\vvec y\to \vvec y' \in \Rx_2$ if and only if 
$\vvec y\to \vvec y'\in \Rx_1$ is of order $K$ for some $K\le N_2$.
That is, $\Rx'$ only contains a reaction in $\Rx_1$ whose order is less than 
or equal to $N_2$. Furthermore the reaction intensity of each reaction 
$\vvec y\to \vvec y' \in \Rx'$ is equal to the reaction intensity 
of $\vvec y\to \vvec y'\in \Rx_1$. Therefore $(\Rx',\Ki') \subset (\Rx_1,\Ki_1)$
Note also that since ${\mathbf X}_1(t)$ and ${\mathbf X}_2(t)$ have the same 
transition rates on $\mathbb S_{N_2}$, by uniqueness shown in 
Lemma~\ref{lem:same order systems are same}, we have
$(\Rx',\Ki')=(\Rx_2,\Ki_2)$ because the order of both 
reaction systems are less than or equal to $N_2$, and the associated 
CTMC's have the same transition rates on $\mathbb S_{N_2}$. 
\end{proof}

\noindent
Lemma~\ref{lem:relation bewteen order N1 and N2} ensures that if two reaction systems 
have the same transition rates, then the one with lower order is a subsystem of the 
other. Using this fact, we obtain identifiability of a reaction system.

\begin{theorem}\label{thm:identify order N}
Let $\vvec X(t)$ be a {\rm CTMC} associated with an order $N$ reaction 
system $(\Rx,\Ki)$ with the state space $\mathbb S$. 
If $\mathbb S_{N} \subseteq \mathbb S$, then $\vvec X(t)$ is identifiable.
\end{theorem}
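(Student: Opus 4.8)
The plan is to check both directions of Definition~\ref{def:identify}: that $(\Rx,\Ki)$ itself is a witness satisfying conditions~(1)--(3) (existence), and that any reaction system satisfying those conditions must equal it (uniqueness). For existence, conditions~(2) and~(3) hold trivially, since by hypothesis $\vvec X(t)$ is the mass-action CTMC of $(\Rx,\Ki)$ and its state space is $\mathbb S$. For condition~(1), I would note that every reaction $\vvec y\to\vvec y'\in\Rx$ has order $\Vert\vvec y\Vert_1\le N$, so its source complex satisfies $\vvec y\in\mathbb S_N\subseteq\mathbb S$; by~\eqref{eq:standard property} we have $\lambda_{\vvec y\to\vvec y'}(\vvec y)>0$, so the reaction is charged at $\vvec y\in\mathbb S$. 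This ``charged at its own source'' observation will be reused below.

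For uniqueness, let $(\Rx',\Ki')$ be any order-$N'$ reaction system satisfying~(1)--(3). The two systems agree on all of $\mathbb S$, hence on $\mathbb S_{\min\{N,N'\}}\subseteq\mathbb S_N\subseteq\mathbb S$. If $N'=N$, Lemma~\ref{lem:same order systems are same} applies directly on $\mathbb S_N$ and gives $(\Rx',\Ki')=(\Rx,\Ki)$. If the orders differ, I would apply Lemma~\ref{lem:relation bewteen order N1 and N2} to the system of larger order: this produces a subsystem relation in which the lower-order system is contained in the higher-order one, with matching intensities on the shared reactions.

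The hard part is to eliminate the ``extra'' reactions, i.e.\ those in the set-difference $\Rx'\setminus\Rx$ or $\Rx\setminus\Rx'$, because $\mathbb S$ may strictly contain $\mathbb S_N$ while the lemmas only constrain behaviour on $\mathbb S_N$. Here I would use nonnegativity of mass-action intensities: fixing a transition vector $\vvec z$ and subtracting the equal contributions of the common reactions, condition~(3) forces the sum of the intensities of the extra reactions with transition vector $\vvec z$ to vanish at every $\vvec x\in\mathbb S$, so each such intensity is identically zero on $\mathbb S$, meaning every extra reaction is turned off throughout $\mathbb S$. This is impossible: if $(\Rx',\Ki')$ is the larger system, it contradicts condition~(1) (which requires each of its reactions to be charged somewhere in $\mathbb S$); if $(\Rx,\Ki)$ is the larger system, it contradicts the existence-step observation that each reaction of $\Rx$ is charged at its source $\vvec y\in\mathbb S_N\subseteq\mathbb S$. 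Hence there are no extra reactions, the two reaction sets coincide with equal intensities, and $(\Rx',\Ki')=(\Rx,\Ki)$. Existence and uniqueness together give identifiability of $\vvec X(t)$.
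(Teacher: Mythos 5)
Your proof is correct and follows essentially the same route as the paper's: Lemma~\ref{lem:same order systems are same} handles competitors of equal order, Lemma~\ref{lem:relation bewteen order N1 and N2} yields the subsystem relation when the orders differ, and the contradiction comes from an extra reaction that must be charged somewhere in $\mathbb S$ yet, by nonnegativity of mass-action intensities, contribute nothing to the shared transition rates there. Your explicit verification of existence (each reaction of $\Rx$ is charged at its own source $\vvec y \in \mathbb S_N \subseteq \mathbb S$) and your use of condition (1) of Definition~\ref{def:identify} to dispose of a competitor of order larger than $N$ --- where the paper only says ``for the same reason'' --- are slightly more careful than the paper's write-up, but they do not change the argument.
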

\begin{proof}
First of all, suppose that there exists a reaction system 
$(\overline \Rx,\overline \Ki)$ of order $\overline N$ where
$\overline{N}<N$ such that the associated mass-action system satisfies 
the conditions (1)-(3) in Definition~\ref{def:identify}. 
Then Lemma~\ref{lem:relation bewteen order N1 and N2} implies that 
$(\overline \Rx,\overline \Ki) \subset (\Rx,\Ki)$. Since $\overline N<N$, 
there exists a reaction $\tilde{\vvec y} \to \tilde{\vvec y}'$ of order $N$ 
that belongs to $\Rx\setminus \overline{\Rx}$. 
Let ${\mathbf z}=\tilde{\vvec y}'-\tilde{\vvec y}$. Then at 
state $\tilde{\vvec y} \in \mathbb S_N$, 
$$
\sum_{\substack{\vvec y\to \vvec y' \in \Rx\\ \vvec y'-\vvec y={\mathbf z}}}
\lambda_{\vvec y\to \vvec y'}(\tilde{\vvec y})
\; -
\sum_{\substack{\vvec y\to \vvec y' \in \overline \Rx\\ \vvec y'-\vvec y={\mathbf z}}}
\overline \lambda_{\vvec y\to \vvec y'}(\tilde{\vvec y}) 
\ge 
\lambda_{\tilde{\vvec y}\to\tilde{\vvec y}}(\tilde{\vvec y})
>
0,
$$
where $\lambda_{\vvec y \to \vvec y}$ and $\overline \lambda_{\vvec y \to \vvec y}$ 
are the reaction intensity associated with a reaction $\vvec y \to \vvec y'$ of
$(\Rx,\Ki)$ and $(\overline \Rx,\overline \Ki)$, respectively. Therefore 
it contradicts to the fact that $(\overline \Rx,\overline \Ki)$ has the same 
transition rates on each state $\vvec x \in \mathbb S_N$ as $\vvec X(t)$. 
For the same reason, there does not exists a reaction system, which has higher 
order than $N$, satisfies the conditions (1)-(3) in Definition~\ref{def:identify}. 

In conclusion, the only reaction network satisfying the conditions (1)-(3) 
in Definition~\ref{def:identify} is $(\Rx,\Ki)$ because uniqueness among 
reaction systems of order $N$ is guaranteed by 
Lemma~\ref{lem:same order systems are same} and $(\Rx,\Ki)$ satisfies 
the condition (1)-(3) in Definition~\ref{def:identify}.
\end{proof}

\noindent
In practical situations, it is often that an associated mass-action 
CTMC $\vvec X(t)$ is given, but the underlying reaction system $(\Rx,\Ki)$ 
is unknown. However, it is reasonable to assume that the order of 
$(\Rx,\Ki)$ does not exceed a relatively small number $\overline N$ 
for general biochemical system (for example, many biochemical systems 
are at most bimolecular, hence we could set $\overline N=2$). Under 
this assumption, $\vvec X(t)$ is identifiable as long as enough 
information about the transition rates is given. The case of the unknown
order is a consequence of Theorems~\ref{thm:theoretical network inference} and \ref{thm:identify order N} and is formulated as the following corrollary.

\begin{corollary}\label{cor:unknown order}
Let a CTMC $\vvec X(t)$ be a mass-action stochastic system associated 
with an unknown order $N$ reaction system $(\Rx,\Ki)$ with the state 
space $\mathbb S$. Suppose that $N \le \overline{N}$ for some positive 
integer $\overline{N}$. Suppose further that $\mathbb S_{\bar N} \subseteq \mathbb S$.
Then $\vvec X(t)$ is identifiable. Moreover, by using the transition rates 
of $\vvec X(t)$, the true network $(\Rx,\Ki)$ can be explicitly inferred.
\end{corollary}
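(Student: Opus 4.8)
The plan is to reduce both assertions to the three results already established, namely Lemma~\ref{lem:same order systems are same}, Theorem~\ref{thm:theoretical network inference}, and Theorem~\ref{thm:identify order N}. The only genuinely new feature compared with Theorem~\ref{thm:identify order N} is that the exact order $N$ is unknown; we are handed only the bound $N \le \overline N$. The first thing I would record is that this bound, together with the hypothesis $\mathbb S_{\overline N} \subseteq \mathbb S$, yields the chain of inclusions $\mathbb S_N \subseteq \mathbb S_{\overline N} \subseteq \mathbb S$, since $\Vert \vvec x \Vert_1 \le N$ implies $\Vert \vvec x \Vert_1 \le \overline N$. In particular $\mathbb S_N \subseteq \mathbb S$.

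With that inclusion in hand, the identifiability assertion is immediate. The chain $\vvec X(t)$ is a CTMC associated with the order $N$ reaction system $(\Rx,\Ki)$ whose state space $\mathbb S$ contains $\mathbb S_N$, so the hypotheses of Theorem~\ref{thm:identify order N} hold verbatim and that theorem already concludes that $\vvec X(t)$ is identifiable. No knowledge of the precise value of $N$ is required for this step, only the inclusion $\mathbb S_N \subseteq \mathbb S$ just obtained.

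For the \emph{moreover} part, the explicit recovery of $(\Rx,\Ki)$, I would run the inference algorithm of Theorem~\ref{thm:theoretical network inference} on the larger region $\mathbb S_{\overline N}$ rather than on $\mathbb S_N$, precisely because $N$ is unknown while $\overline N$ is at hand. To invoke that theorem I first need its nonnegativity hypothesis on the constants $c^i_\vvec z$ appearing in $(\ref{eq:condition for the algorithm})$. This is exactly what the remark following Theorem~\ref{thm:theoretical network inference} supplies: since the transition rates $\lambda^*_\vvec z$ of $\vvec X(t)$ are produced by a genuine order $N$ mass-action system, every $c^i_\vvec z$ is nonnegative. Applying Theorem~\ref{thm:theoretical network inference} with $N$ replaced by $\overline N$ then produces a unique mass-action system $(\Rx',\Ki')$ of order at most $\overline N$ whose associated CTMC reproduces the transition rates of $\vvec X(t)$ on all of $\mathbb S_{\overline N}$.

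It remains to identify the inferred system $(\Rx',\Ki')$ with the true system $(\Rx,\Ki)$, and this is the one step where a little care is needed, since working over the oversized region $\mathbb S_{\overline N}$ could a priori introduce spurious reactions of order between $N+1$ and $\overline N$. To rule this out I would appeal to Lemma~\ref{lem:same order systems are same}: both $(\Rx,\Ki)$ and $(\Rx',\Ki')$ have order at most $\overline N$, and by construction they share the same transition rates on $\mathbb S_{\overline N}$, so taking the lemma's parameter equal to $\overline N$ forces $(\Rx',\Ki') = (\Rx,\Ki)$. Hence the algorithm applied to the transition data on $\mathbb S_{\overline N}$ returns precisely the true network, completing the proof. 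The main obstacle, such as it is, is purely bookkeeping: keeping clear that the inference region $\mathbb S_{\overline N}$ is dictated by the known bound $\overline N$ while the target system has the smaller unknown order $N$, and recognizing that Lemma~\ref{lem:same order systems are same} invoked with parameter $\overline N$ is exactly what closes this gap.
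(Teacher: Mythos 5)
Your proof is correct and follows exactly the route the paper intends: the paper gives no written proof of Corollary~\ref{cor:unknown order}, stating only that it is a consequence of Theorems~\ref{thm:theoretical network inference} and~\ref{thm:identify order N}, and your argument is a careful fleshing-out of precisely that reduction (identifiability via Theorem~\ref{thm:identify order N} using $\mathbb S_N\subseteq\mathbb S_{\overline N}\subseteq\mathbb S$, explicit recovery via the algorithm of Theorem~\ref{thm:theoretical network inference} run on $\mathbb S_{\overline N}$, and uniqueness via Lemma~\ref{lem:same order systems are same}).
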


\subsection{Identifiability of reaction systems with 
conservation laws}

If the transition rate of a Markov process is given over a proper
subset $\mathbb A \subset \mathbb S_N$ for given $N>0$, then 
two distinct reaction systems of order $N$ may be constructed 
having the same transition rates over $\mathbb A$. Since 
$\mathbb A$ is the proper subset 
of $\mathbb S_N$, we have 
$
\mathbb A \subset \{\vvec x^1,\vvec x^2,\dots,\vvec x^m\} 
\subset \mathbb S_N
$ where 
$m<n$. 
Given the transition rates on 
$\{\vvec x^1,\vvec x^2,\dots,\vvec x^m\}$
and considering $\vvec x^\ell \succ \vvec x^m$ such that 
$\vvec x^\ell \in \mathbb S_N$, the mass-action reaction 
intensity associated with a reaction 
$\vvec x_\ell \to \vvec x_\ell +\vvec z$ is zero at each 
state in $\{\vvec x^1,\vvec x^2,\dots,\vvec x^m\}$. Hence 
by adding or removing $\vvec x_\ell\to \vvec x_\ell +\eta$, 
we obtain different reaction systems that have the 
same transition rates on $\mathbb A$.

Next, we consider other situations where the underlying reaction
system of a CTMC is not uniquely determined. Suppose a given CTMC
$\vvec X(t)$ associated with a stochastic reaction
network of order $N$ admits a conservation
law, i.e. there exists $\vvec v\in\ZZ^d_{\ge 0}$ such that
$\vvec v \cdot \vvec X(t)=\vvec v\cdot \vvec X(0)$ for any time
$t \ge 0$. In this section, we simplify our discussion 
by considering that the vector $\vvec v$ has all non-zero 
components, that is $\vvec v\in\ZZ^d_{> 0}$. Then
the state space of $\vvec X(t)$ is confined to a finite hyperplane 
$\mathbb S_{\vvec v, N}$ of $\mathbb Z^d_{> 0}$. 
In this case, one of the main questions is whether 
the information about the transition rates over a single hyperplane 
is sufficient to uniquely infer the underlying reaction system. 

In this section, we show how to construct a reaction network 
of order~$N$ with given transition rates over a single
hyperplane~$\mathbb S_{\vvec v, N}$, see the definition
(\ref{defSvN}). We further show that when a given reaction 
system $(\Rx,\Ki)$ is of order $N$, then the underlying reaction 
network is not uniquely identified with given transition rates 
on a single hyperplane $\mathbb S_{\vvec v, N'}$ such that $N<N'$. 

\begin{theorem}\label{thm:ex of CRN on conv}
Let $\vvec z\in \ZZ^d$, $\vvec v \in \ZZ^d_{> 0}$ and $N>0$.
Let $\lambda(\vvec x)$ be a given non-negative
function defined on $\mathbb S_{\vvec v,N}$  such that 
$\lambda(\vvec x)>0$ for at least 
one $\vvec x\in \mathbb S_{\vvec v,N}$.
Then there exists a mass-action reaction system 
$(\Rx^{\vvec z},\Ki^{\vvec z})$ of $\vvec v$-order $N$ such that 
the transition rates at each $\vvec x \in \mathbb S_{\vvec v,N}$ 
are equal to $\lambda(\vvec x)$. That is
\begin{equation}
\label{statthe}
\sum_{\substack{\vvec y\to \vvec y' \in \Rx^{\vvec z}}}
\lambda_{\vvec y\to \vvec y'}(\vvec x)=\lambda(\vvec x) 
\quad \text{for each} \quad 
\vvec x \in \mathbb S_{\vvec v, N}.
\end{equation}
\end{theorem}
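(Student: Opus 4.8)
The plan is to exploit the geometry of the hyperplane $\mathbb S_{\vvec v, N}$ to decouple the construction state by state, which collapses the iterative algorithm behind Theorem~\ref{thm:theoretical network inference} into a trivial, localized one. The observation I would establish first is that, because $\vvec v \in \ZZ^d_{>0}$, any two states $\vvec x, \vvec w \in \mathbb S_{\vvec v, N}$ with $\vvec x \ge \vvec w$ componentwise must coincide: indeed $\vvec v \cdot (\vvec x - \vvec w) = N - N = 0$ while $\vvec x - \vvec w \ge 0$ and every $v_j > 0$ force $\vvec x - \vvec w = 0$. Since the falling-factorial weight ${\vvec x}^{(\vvec w)} = \prod_{j=1}^d x_j(x_j-1)\cdots(x_j - w_j + 1)$ is nonzero exactly when $x_j \ge w_j$ for all $j$, this means that a mass-action reaction whose source complex $\vvec w$ lies on $\mathbb S_{\vvec v, N}$ has an intensity that vanishes at every state of $\mathbb S_{\vvec v, N}$ except $\vvec w$ itself, where it equals ${\vvec w}^{(\vvec w)} = \prod_{j=1}^d w_j! > 0$.

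Given this \emph{localization}, the construction essentially writes itself. I would enumerate $\mathbb S_{\vvec v, N} = \{\vvec w^1, \vvec w^2, \dots, \vvec w^m\}$ and, for each index $i$ with $\lambda(\vvec w^i) > 0$, include the single reaction $\vvec w^i \to \vvec w^i + \vvec z$ carrying the rate constant
\[
\kappa_i = \frac{\lambda(\vvec w^i)}{{\vvec w^i}^{(\vvec w^i)}},
\]
thereby setting $\Rx^{\vvec z} = \{\vvec w^i \to \vvec w^i + \vvec z : \lambda(\vvec w^i) > 0\}$ and $\Ki^{\vvec z} = \{\lambda_{\vvec w^i \to \vvec w^i + \vvec z}(\vvec x) = \kappa_i\, \vvec x^{(\vvec w^i)}\}$. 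Because $\lambda \ge 0$ and ${\vvec w^i}^{(\vvec w^i)} > 0$, every $\kappa_i$ is automatically nonnegative, so no sign hypothesis analogous to the nonnegativity of $c_{\vvec z}^i$ in Theorem~\ref{thm:theoretical network inference} is required here; the hyperplane geometry supplies it for free. States at which $\lambda$ vanishes contribute no reaction, and $\Rx^{\vvec z}$ is nonempty precisely because $\lambda(\vvec x) > 0$ for at least one $\vvec x$.

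Verifying (\ref{statthe}) is then immediate. At any $\vvec w^k \in \mathbb S_{\vvec v, N}$,
\[
\sum_{\vvec y \to \vvec y' \in \Rx^{\vvec z}} \lambda_{\vvec y \to \vvec y'}(\vvec w^k) = \sum_{i \,:\, \lambda(\vvec w^i)>0} \kappa_i\, {\vvec w^k}^{(\vvec w^i)} = \kappa_k\, {\vvec w^k}^{(\vvec w^k)} = \lambda(\vvec w^k),
\]
where the middle equality drops every $i \ne k$ by the localization property; when $\lambda(\vvec w^k) = 0$ the identity holds trivially since both sides vanish. Finally every reaction has source complex $\vvec w^i$ with $\vvec v \cdot \vvec w^i = N$, so the system has $\vvec v$-order exactly $N$, as required.

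I do not expect a genuine obstacle: in contrast to Theorem~\ref{thm:theoretical network inference}, there is no upper-triangular system to invert and no nonnegativity to hypothesize. The single point deserving care is admissibility of the reaction $\vvec w^i \to \vvec w^i + \vvec z$, namely $\vvec w^i + \vvec z \in \ZZ^d_{\ge 0}$; this is automatic in the intended setting, since $\lambda$ is the transition rate of a {\rm CTMC} in the direction $\vvec z$, so $\lambda(\vvec w^i) > 0$ already forces the target state $\vvec w^i + \vvec z$ to lie in the state space.
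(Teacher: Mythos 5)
Your proposal is correct and follows essentially the same route as the paper's proof: both exploit the fact that $\vvec v\in\ZZ^d_{>0}$ forces any reaction with source complex on $\mathbb S_{\vvec v,N}$ to be charged only at its own source state within that hyperplane, and then assign each state $\vvec x$ with $\lambda(\vvec x)>0$ the single reaction $\vvec x\to\vvec x+\vvec z$ with rate constant $\lambda(\vvec x)/\vvec x^{(\vvec x)}$. Your spelled-out justification of the localization step and the closing remark on admissibility of $\vvec x+\vvec z$ are slightly more explicit than the paper's, but the construction and verification are identical.
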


\begin{proof}
The key idea of the proof is that (under the mass-action 
kinetics) every reaction of $\vvec v$-order $N$ is charged 
at a single state $\vvec x \in \mathbb S_{\vvec v, N}$ and turned 
off elsewhere in $\mathbb S_{\vvec v, N}$. So we will collect all 
reactions $\vvec x\to \vvec x+\vvec z$ for each 
$\vvec x\in\mathbb S_{\vvec v,N}$ as long as 
$\lambda(\vvec x)>0.$ 
We define 
\begin{align*}
&
\Rx^{\vvec z}
=
\left\{
\vvec x\to \vvec x+\vvec z 
\;\big|\; 
\lambda(\vvec x)>0, \; \vvec  x \in \mathbb S_{\vvec v,N} 
\right\} \text{, and} \\
&
\Ki^{\vvec z}
=
\left\{\lambda_{\vvec x\to \vvec x+\vvec z}(\vvec w)
=
\frac{\lambda(\vvec x)}{\vvec x^{(\vvec x)}} \, \vvec w^{(\vvec x)}
\text{ for any $\vvec w\in\ZZ^d_{\ge 0}$} 
\; \Big| \; 
\lambda(\vvec x)>0, \; \vvec x \in \mathbb S_{\vvec v,N} 
\right 
\}.
\end{align*}
Since $\vvec v \in \ZZ^d_{> 0}$, for any two distinct states 
$\vvec x$ and $\vvec x'$ in $\mathbb S_{\vvec v,N}$, there is an 
index $k$ such that $x_k > x'_k$. Therefore the reaction 
$\vvec x \to \vvec x +\vvec z \in \Rx^{\vvec x}$ is turned off 
at $\vvec x'$ if and only if $\vvec x\neq \vvec x'$. This implies 
that for any $\vvec x \in \mathbb{S}_{\vvec v,N}$ such that 
$\lambda(\vvec x) > 0$,
$$
\lambda(\vvec x)
=
\frac{\lambda(\vvec x) }{\vvec x^{(\vvec  x)}}
\, \vvec x^{(\vvec x)}
=
\lambda_{\vvec x \to \vvec x +\vvec z}(\vvec x)
=
\sum_{\vvec y\to \vvec y' \in \Rx^{\vvec z}}
\lambda_{\vvec y\to \vvec y'}(\vvec x).
$$
Equation (\ref{statthe}) is also valid for any 
$\vvec x\in \mathbb S_{\vvec v,N}$ satisfying 
$\lambda(\vvec x)=0$, because we have 
$(\vvec x\to \vvec x +\vvec z) \not \in \Rx^{\vvec z}$ and each 
$\vvec x'\to \vvec x'+\vvec z \in \Rx_{\vvec z}$ is turned off 
at $\vvec x$.
\end{proof}

\noindent
Theorem \ref{thm:ex of CRN on conv} implies that for 
a given CTMC defined on a hyperplane $\mathbb S_{\vvec v,N}$, 
we can construct a reaction network of $\vvec v$-order $N$ such 
that the associated mass-action CTMC admits the same transition 
rates on $\mathbb S_{\vvec v,N}$. By using this, we prove that 
a CTMC associated with a conservative reaction system of 
$\vvec v$-order $N$ is not identifiable if the transition data 
of the CTMC are only given on $\mathbb S_{\vvec v,N'}$ for 
some $N' > N$.

\begin{theorem}\label{thm:non unique on a hyper}
Let  $(\Rx,\Ki)$ be a mass-action reaction system that admits 
a conservation law with $\vvec v\in \ZZ^d_{> 0}$ such 
that $\vvec v \cdot (\vvec y'-\vvec y)=0$ for each 
$\vvec y\to \vvec y'\in \Rx$. Suppose that the $\vvec v$-order 
of $(\Rx,\Ki)$ is $N$. Let $\vvec X(t)$ be the CTMC associated 
with $(\Rx,\Ki)$ such that $\vvec v\cdot \vvec X(0)=N'$
and $N'> N$. Then the CTMC $\vvec X(t)$ is not identifiable.
\end{theorem}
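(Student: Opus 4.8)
The plan is to exhibit two genuinely distinct mass-action reaction systems, each of which reproduces the transition rates of $\vvec X(t)$ on its state space and each of which satisfies conditions (1)--(3) of Definition~\ref{def:identify}; non-identifiability then follows immediately from that definition. Since every reaction vector $\vvec z=\vvec y'-\vvec y$ satisfies $\vvec v\cdot\vvec z=0$ and $\vvec v\cdot\vvec X(0)=N'$, the conservation law confines the process to the hyperplane, so I take its state space to be $\mathbb S\subseteq \mathbb S_{\vvec v,N'}$. The first candidate system is $(\Rx,\Ki)$ itself, after discarding any reaction that is never charged on $\mathbb S$ (this removal alters none of the transition rates). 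It has $\vvec v$-order at most $N$ and satisfies (1)--(3) essentially by construction, being the system that generates $\vvec X(t)$.

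For the second system I would fix each transition vector $\vvec z\in{\cal Z}$ and apply the construction of Theorem~\ref{thm:ex of CRN on conv} to the rate function $\lambda^*_{\vvec z}$ of $\vvec X(t)$: for every state $\vvec x^*\in\mathbb S$ with $\lambda^*_{\vvec z}(\vvec x^*)>0$ I include the single reaction $\vvec x^*\to\vvec x^*+\vvec z$, with mass-action intensity calibrated so that its rate equals $\lambda^*_{\vvec z}(\vvec x^*)$ at $\vvec x^*$. Taking the union over all $\vvec z$ yields a system $(\Rx',\Ki')$. The geometric fact underlying Theorem~\ref{thm:ex of CRN on conv} --- that because $\vvec v\in\ZZ^d_{>0}$ a reaction whose source complex lies in $\mathbb S_{\vvec v,N'}$ is charged at that source state and nowhere else on the hyperplane --- guarantees that $(\Rx',\Ki')$ reproduces exactly the rates of $\vvec X(t)$ on $\mathbb S$ (condition (3)), that each of its reactions is charged at its source state in $\mathbb S$ (condition (1)), and, since it shares the same positive rates as $\vvec X(t)$ at every state of $\mathbb S$, that the reachable set from $\vvec X(0)$ is again $\mathbb S$, so its associated CTMC has state space containing $\mathbb S$ (condition (2)).

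The decisive step is distinctness. Every reaction of $(\Rx',\Ki')$ has its source complex in $\mathbb S\subseteq\mathbb S_{\vvec v,N'}$, hence $\vvec v$-order exactly $N'$; provided $\vvec X(t)$ is non-trivial (some transition rate is positive on $\mathbb S$), $\Rx'$ is non-empty and $(\Rx',\Ki')$ therefore has $\vvec v$-order $N'$. The first system has $\vvec v$-order at most $N<N'$, so it contains no reaction of $\vvec v$-order $N'$, whereas $\Rx'$ does; hence $(\Rx,\Ki)\ne(\Rx',\Ki')$. Having produced two distinct systems that each meet conditions (1)--(3), I conclude that $\vvec X(t)$ is not identifiable.

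I expect the main obstacle to be the bookkeeping around condition (1) and the precise state space. One must ensure both systems are free of reactions that are never charged on $\mathbb S$, so that the two are compared on equal footing and the non-identifiability is genuine rather than the trivial kind discussed before the theorem; and one must confirm that restricting the construction of Theorem~\ref{thm:ex of CRN on conv} from the full hyperplane to $\mathbb S$ preserves its conclusion, which it does precisely because each $\vvec v$-order-$N'$ reaction is active at only its single source state. The conceptual core --- that the same rate functions can be realized either by lower-$\vvec v$-order reactions spread across several states or by one $\vvec v$-order-$N'$ reaction per state --- is immediate once Theorem~\ref{thm:ex of CRN on conv} is in hand.
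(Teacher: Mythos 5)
Your proposal is correct and follows essentially the same route as the paper: both construct a second, distinct system by applying Theorem~\ref{thm:ex of CRN on conv} to each transition rate function on the hyperplane, obtaining a system of $\vvec v$-order $N'$ that reproduces the rates, and conclude distinctness from the gap between $N$ and $N'$. The only point you leave as a hypothesis (``provided $\vvec X(t)$ is non-trivial'') is the one step the paper argues explicitly, by translating a charged source complex $\vvec x^*\in\mathbb S_{\vvec v,N}$ up to a state of $\mathbb S_{\vvec v,N'}$ to exhibit a state where $\lambda_{\vvec z}>0$; otherwise your treatment of condition (1) and of the possibly proper reachable set is, if anything, slightly more careful than the paper's.
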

\begin{proof}
Because of the conservation law, the state space of 
$\vvec X(t)$ is $\mathbb S_{\vvec v,N'}$, defined by (\ref{defSvN}),
because $\vvec v\cdot \vvec X(t)=\vvec v\cdot \vvec X(0)$ 
for any time $t \ge 0$. For a fixed transition vector 
$\vvec z$ in the set of transition vectors $\cal Z$ of 
$\vvec X(t)$, we denote by $\lambda_{\vvec z} (\vvec x)$ the 
transition rate of $\vvec X(t)$ at $\vvec x\in \mathbb S_{\vvec v,N'}$. 
Then for each $\vvec x\in \mathbb S_{\vvec v,N'}$, we have
$$
\lambda_{\vvec z}(\vvec x)
=
\sum_{\substack{\vvec y\to \vvec y' \in \Rx\\ 
\vvec y'-\vvec y\,=\,\vvec z}}
\lambda_{\vvec y\to \vvec y'}(\vvec x),
$$
where $\lambda_{\vvec y\to \vvec y'}$ is the intensity of 
reaction $(\vvec y\to \vvec y') \in \Rx$. 
Since $\lambda_{\vvec z}$ is the transition rate of 
a mass-action reaction system of $\vvec v$-order equal to $N$, 
there exists $\vvec x^*\in \mathbb S_{\vvec v, N}$ such that 
$(\vvec x^*\to \vvec x^*+\vvec z) \in \Rx$. 
Therefore for 
$
\vvec x'
=
\vvec x^*+((N'-N)/v_1,0,0,\dots,0)^\top\in \mathbb S_{\vvec v,N'}
$, we have
$
\lambda_{\vvec z}(\vvec x')\ge \lambda_{\vvec x^*\to \vvec x^* +\vvec z}(\vvec x') >0.
$
This means that there exist at least one 
$\vvec x' \in \mathbb S_{\vvec v,N'}$ 
such that $\lambda_{\vvec z}(\vvec x')>0$. Hence by using 
Theorem~\ref{thm:ex of CRN on conv} with $\lambda_{\vvec z}$ 
and $\vvec z$, we can construct a reaction system 
$(\Rx^{\vvec z},\Ki^{\vvec z})$ of $\vvec v$-order $N'$. 
Then we have
\begin{align*}
    \lambda_{\vvec z}(\vvec x)=\sum_{\vvec y\to \vvec y' \in \Rx^{\vvec z}}\overline \lambda_{\vvec y\to \vvec y'}(\vvec x),
\end{align*}
where $\overline \lambda_{\vvec y\to \vvec y'}$ is the intensity of
reaction $\vvec y\to \vvec y'$ in $(\Rx^{\vvec z},\Ki^{\vvec z})$.
Applying Theorem~\ref{thm:ex of CRN on conv} in the same way 
for all transition vectors $\vvec z \in \cal Z$, we define
$$
\overline \Rx
=
\bigcup_{\vvec z \in \mathcal Z}\Rx^{\vvec z}, 
\quad \text{and} \quad 
\overline \Ki=\bigcup_{\vvec z\in \mathcal Z}\Ki^{\vvec z}.
$$
Then we have 
$$
\lambda_{\vvec z}(\vvec x)
=
\sum_{\substack{\vvec y\to \vvec y' \in \Rx^{\vvec z}}}
\overline \lambda_{\vvec y\to \vvec y'}(\vvec x)
=
\sum_{\substack{\vvec y\to \vvec y' \in \overline \Rx \\ 
\vvec y'-\vvec  y \, = \, \vvec z}}
\overline \lambda_{\vvec y\to \vvec y'}(\vvec x),
$$
for each $\vvec x \in \mathbb S_{\vvec v,N'}$
and for each transition vector $\vvec z$ of $\vvec X(t)$.
This implies that the CTMC associated with 
$(\overline \Rx,\overline \Ki)$ has the same transition rates 
on $\mathbb S_{\vvec v,N'}$, which is the state space of 
$\vvec X(t)$. Since $(\overline\Rx,\overline \Ki)$ is of 
$\vvec v$-order $N'$, two reaction systems $(\Rx,\Ki)$ 
and $(\overline \Rx,\overline \Ki)$ are distinct. 
Hence $\vvec X(t)$ is not identifiable.
\end{proof}

\noindent
We illustrate Theorem~\ref{thm:non unique on a hyper} 
using the following example.

\begin{example}\label{ex:non unique1}
Let $\vvec X(t)$ be the CTMC associated with the 
mass-action reaction system
\begin{align}
\label{eq:example network1}
X_1
\xrightleftarrows{1}{1}{4 mm}
X_2 \, .
\end{align}
Note that this system admits a conservation law such that 
$\vvec v\cdot \vvec X 
= 
X_1(0)+ X_2(0)
$ where $\vvec v=(1,1)^\top \in \ZZ^2_{> 0}$.
With $\vvec X(0)=(2,0)$, the transition rates of $\vvec X(t)$ 
at its state space $\mathbb S_{\vvec v,2}$ are
\begin{equation}
\lambda_{(-1,1)}(2,0) = \lambda_{(1,-1)}(0,2) = 2, 
\qquad \mbox{and} \qquad
\lambda_{(-1,1)}(1,1) = \lambda_{(1,-1)}(1,1) = 1.
\label{extransitionrates}
\end{equation}
Note that the $\vvec v$-order of the 
reaction system~$(\ref{eq:example network1})$ is $1$.
By using Theorem~$\ref{thm:ex of CRN on conv}$, we construct 
the following reaction system of $\vvec v$-order $2$ with the 
the same transition rates~$(\ref{extransitionrates})$
on $\mathbb S_{\vvec v,2}$:
\begin{align}\label{eq:example network2}
2 \, X_1
\xrightleftarrows{1}{1}{4 mm}
X_1+X_2 
\xrightleftarrows{1}{1}{4 mm}
2 \, X_2 \, .
\end{align}
The CTMC associated with the reaction system~$(\ref{eq:example network2})$
admits the same transition rates on $\mathbb S_{\vvec v,2}$ as 
$\vvec X(t)$ does. However, these two reaction systems exhibit 
different dynamical behaviours if we consider them on a different
hyperplane $\mathbb S_{\vvec v,N}$ as we show
in Figure~$\ref{fig:example}$ for $N=4$. Considering the initial
condition $\vvec X(0)=(N,0)$,  the mean and variance of the number of
molecules of $X_1$ of the reaction system~$(\ref{eq:example network1})$
are given by~\cite{Erban:2020:SMR}
\begin{equation}
\langle X_1 \rangle = \frac{N}{2}
\,\big(
1 + \exp[-2t]
\big),
\qquad
\langle X_1^2 \rangle - \langle X_1 \rangle^2 
= 
\frac{N}{4}
\,\big(1 - \exp[-4 t] \big).
\label{meanvariance36}
\end{equation}
Using $N=4$, we plot~$(\ref{meanvariance36})$ as the red solid lines
in Figure~$\ref{fig:example}$, where we compare them with the results
calculated for the reaction system~$(\ref{eq:example network2})$
by averaging over $10^7$ realizations of the Gillespie 
stochastic simulation algorithm (SSA).

\begin{figure}[tb]
\leftline{(a) \hskip 7.9cm (b)}
\centerline{\hskip 8mm \includegraphics[height=6.2cm]{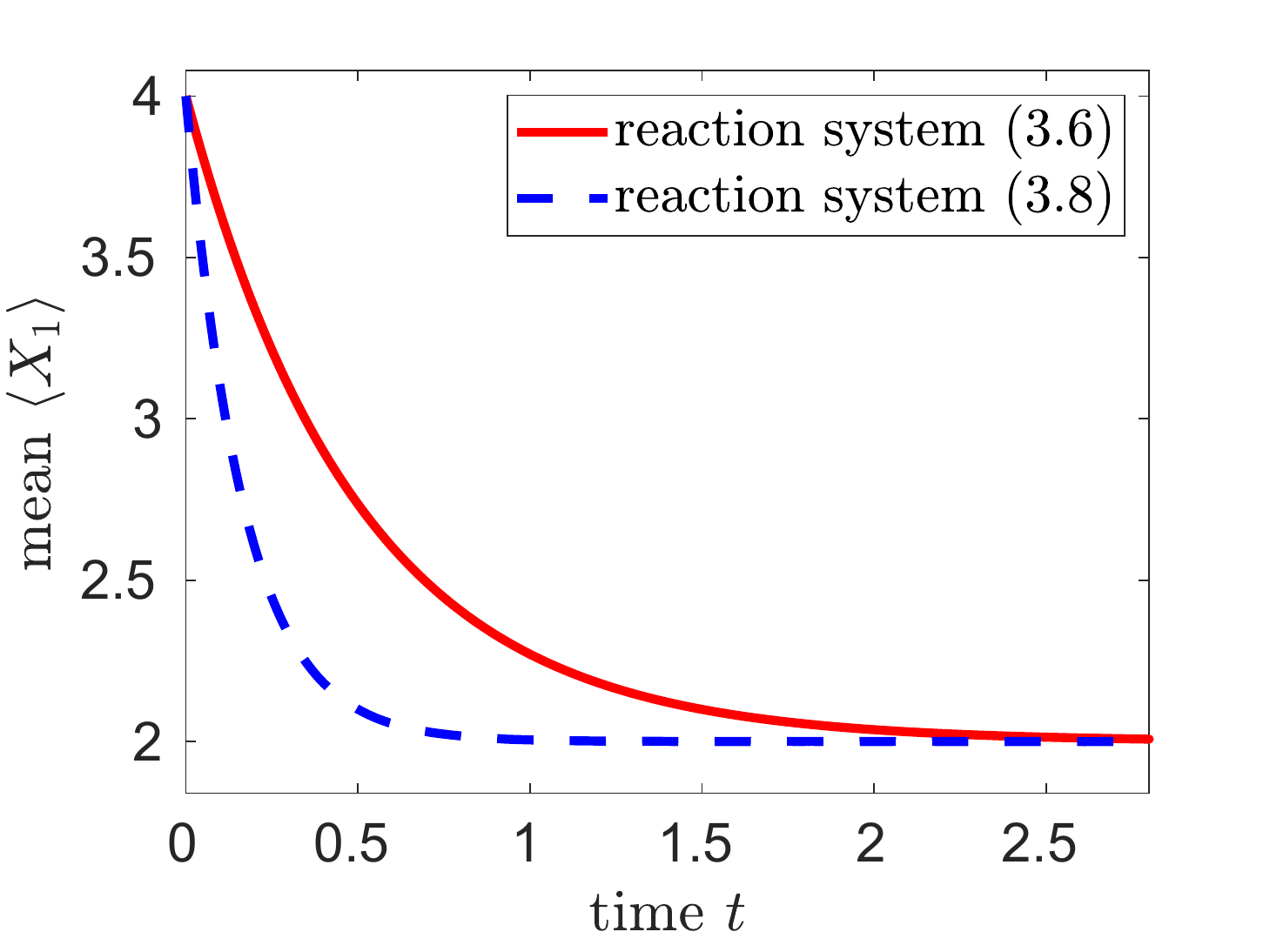}
$\;\;$\includegraphics[height=6.2cm]{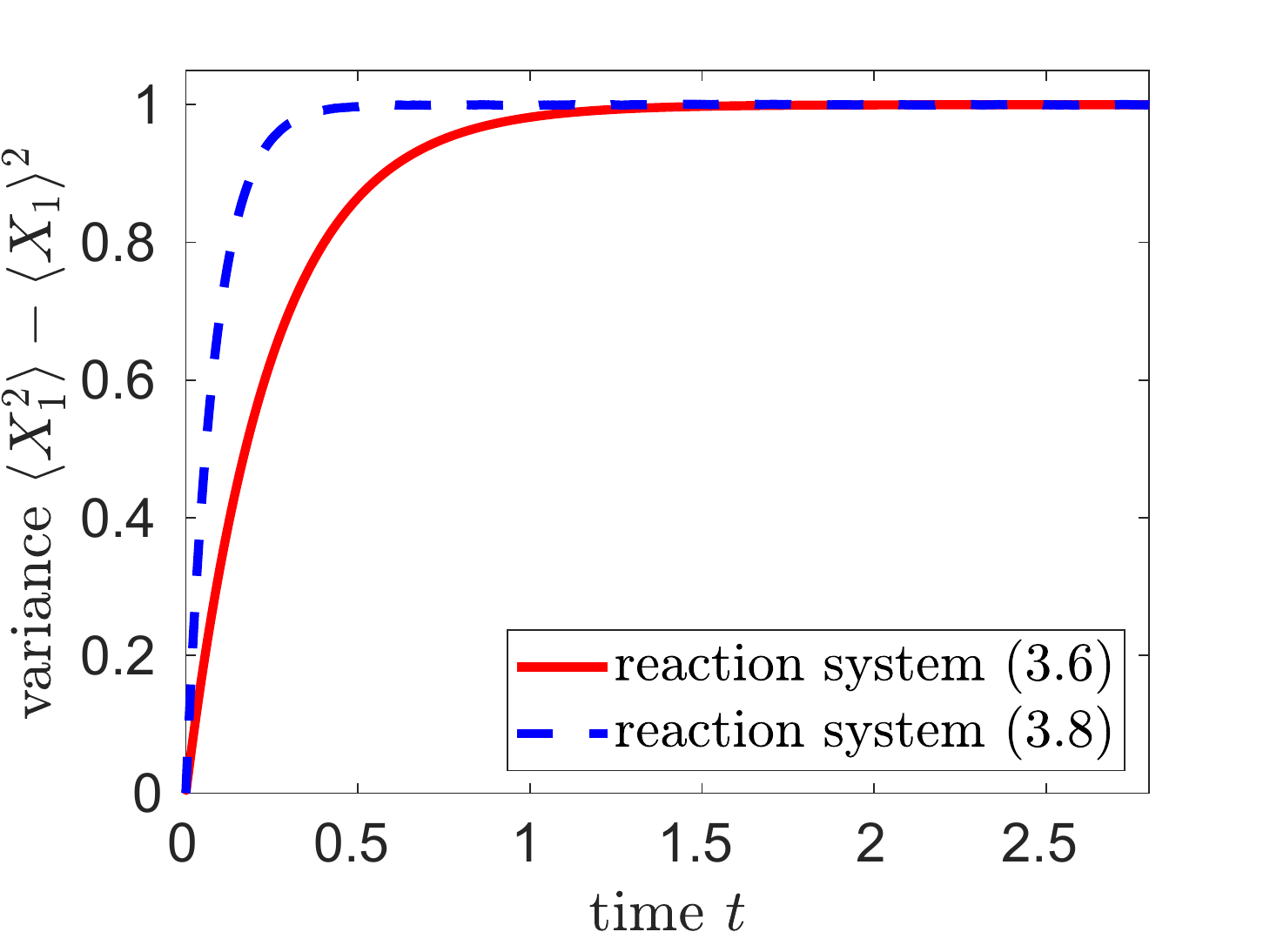}}
\caption{(a) {\it The mean number of molecules of the chemical species
$X_1$ for the chemical system~$(\ref{eq:example network1})$ 
(red solid line) compared with the result for the chemical 
system~$(\ref{eq:example network2})$ (blue dashed line).} \\
(b) {\it Time evolution of the variance of the number of 
molecules of the chemical species $X_1$. We use
the same initial condition $\vvec X(0)=(N,0)$, where $N=4$, 
for both systems. The results for the reaction 
network~$(\ref{eq:example network1})$ are calculated by
equation~$(\ref{meanvariance36})$, while the results for the
reaction network~$(\ref{eq:example network2})$ are estimated
as averages over $10^7$ realizations of the Gillespie SSA.} 
}
\label{fig:example}
\end{figure}
\end{example}

\begin{remark}
If we consider the same hyperplane, $\mathbb S_{\vvec v,2}$, as 
in Example~$\ref{ex:non unique1}$, we can also construct an identifiable
network if the conditions of Theorem~$\ref{thm:non unique on a hyper}$ are not satisfied. For example, replacing the reaction 
system~$(\ref{eq:example network1})$ with the reaction system
$$
2 \, X_1 
\xrightleftarrows{\kappa_1}{\kappa_2}{4 mm}
2 \, X_2 \, .
$$
and letting $X(0)=(2,0)$, the state space is 
$\{(2,0),(0,2)\} \subset \mathbb S_{\vvec v,2}$ with 
$\vvec v=(1,1)^\top$. Then the CTMC $\vvec X(t)$ is the only
reaction network of the $\vvec v$-order $2$ with the
same transition rates on $\mathbb S_{\vvec v,2}$, that is,
the CTMC $\vvec X(t)$ is identifiable.
\end{remark}

\section{Reaction networks for Markov processes with polynomial rates}
\label{sec:polynomial}
In Section~\ref{subsec:main}, we showed that if the transition rates 
of a CTMC are given at each state in $\mathbb S_N$ for some $N$, 
then we can uniquely identify an order $N$ stochastic reaction 
system that has the same transition rates on $\mathbb S_N$.
In this section, we explore the case where the transition rates 
of a CTMC are known on arbitrary states, which are not necessarily
belonging to $\mathbb S_N$. For a $d$-dimensional CTMC, we will use 
the transition rates at (compare with (\ref{defSN})
and (\ref{elemSN}))
$$
n
=
|\mathbb S_N|
=
\binom{N+d}{d}
$$ 
different states to uniquely identify an order $N$ stochastic 
reaction system that has the same transition rates at the given 
states. 

\begin{lemma}\label{lem:find polynomial}
Let $\vvec X(t)$ be a CTMC defined on $\ZZ^d_{\ge 0}$ with the 
finite set of transition vectors~$\cal Z$. Suppose for each transition 
vector $\vvec z \in \cal Z$, the transition rates of $\vvec  X(t)$ 
are given in finite set $A_{\vvec z} \subset \ZZ^d$. Then 
there exists a CTMC $\overline{\vvec X}(t)$ with polynomial transition 
rates such that for each $\vvec z\in \cal Z$ 
\begin{align}
\label{eq:find poly}
\lambda_{\vvec z}(\vvec x)
=
\overline \lambda_{\vvec z}(\vvec x) \quad 
\text{for each $\vvec x\in A_{\vvec z}$},
\end{align}
where $\lambda_{\vvec z}$ is the given transition rate of 
$\vvec X(t)$, and $\overline\lambda_{\vvec z}$ is a polynomial 
transition rate of~$\overline{\vvec X}(t)$. 
Moreover, assume that we have $|A_{\vvec z}|=n=|\mathbb S_N|$ 
for some positive integer $N$, and denote the elements of 
$A_{\vvec z}$ as ${\vvec a}^1$, ${\vvec a}^2,$ $\dots$, 
${\vvec a}^n$ and elements of $\mathbb S_N$ by $(\ref{elemSN})$.
Define matrix $M \in \ZZ^{n \times n}$ with entries 
$$
M_{ij}={{\vvec a}^i}^{({\vvec x}^j)}
\qquad \mbox{ for } \quad 
i=1,2,\dots, n, \; j=1,2,\dots,n.
$$ If matrix $M$ is invertible, then 
$\lambda_{\vvec z}$ is a unique degree $N$ polynomial.
\end{lemma}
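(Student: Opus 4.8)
The plan is to treat the two assertions separately. Fix $\vvec z \in \mathcal Z$ and write $A_{\vvec z} = \{\vvec a^1, \dots, \vvec a^m\}$ with the $\vvec a^i$ distinct. For the existence statement I would build an explicit Lagrange-type interpolant. Since the nodes are distinct, for each pair $k \neq \ell$ there is a coordinate $i(k,\ell)$ with $a^k_{i(k,\ell)} \neq a^\ell_{i(k,\ell)}$, so
$$
L_k(\vvec x) = \prod_{\ell \neq k} \frac{x_{i(k,\ell)} - a^\ell_{i(k,\ell)}}{a^k_{i(k,\ell)} - a^\ell_{i(k,\ell)}}
$$
is a polynomial with $L_k(\vvec a^\ell) = \delta_{k\ell}$. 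Then $\overline\lambda_{\vvec z} = \sum_{k} \lambda_{\vvec z}(\vvec a^k)\, L_k$ is a polynomial agreeing with $\lambda_{\vvec z}$ at every node of $A_{\vvec z}$, and its values there, being equal to the given rates, are non-negative. Carrying this out for every $\vvec z \in \mathcal Z$ yields polynomial rate functions defining a CTMC $\overline{\vvec X}(t)$ and establishes (\ref{eq:find poly}).

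For the additional claim (the second part of the lemma), where $m = n = |\mathbb S_N|$, I would pass to the basis of falling-factorial monomials. The key structural fact is that the $n = \binom{N+d}{d}$ functions $\{\vvec x^{(\vvec x^j)} : \vvec x^j \in \mathbb S_N\}$ form a basis of $\mathcal P$, the space of real polynomials in $d$ variables of total degree at most $N$. Indeed, each one-variable factor $u^{(k)} = u(u-1)\cdots(u-k+1)$ is monic of degree $k$, so $\vvec x^{(\vvec x^j)} = \prod_{i=1}^d x_i^{(x^j_i)}$ equals $\vvec x^{\vvec x^j}$ plus monomials of strictly smaller total degree, and in particular has total degree $\Vert \vvec x^j \Vert_1 \le N$. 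Ordering the multi-indices of $\mathbb S_N$ by increasing total degree makes the change-of-basis matrix between the ordinary monomials $\{\vvec x^{\vvec x^j}\}$ and $\{\vvec x^{(\vvec x^j)}\}$ triangular with unit diagonal, hence invertible; as there are exactly $n = \dim \mathcal P$ of these functions, they form a basis.

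Granting the basis claim, the remainder is linear algebra. Every candidate rate of degree at most $N$ can be written uniquely as $\overline\lambda_{\vvec z}(\vvec x) = \sum_{j=1}^n c_j\, \vvec x^{(\vvec x^j)}$, and imposing $\overline\lambda_{\vvec z}(\vvec a^i) = \lambda_{\vvec z}(\vvec a^i)$ for $i = 1, \dots, n$ turns into the square system $M \vvec c = \vvec b$, where $M_{ij} = {\vvec a^i}^{(\vvec x^j)}$ is exactly the matrix of the statement and $b_i = \lambda_{\vvec z}(\vvec a^i)$. When $M$ is invertible this has the unique solution $\vvec c = M^{-1} \vvec b$, so there is one and only one polynomial of degree at most $N$ agreeing with $\lambda_{\vvec z}$ on $A_{\vvec z}$; in particular, if $\lambda_{\vvec z}$ is known to come from an order-$N$ mass-action system, it is recovered as this unique degree-$N$ polynomial. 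The step requiring the most care, and the main obstacle, is the basis claim of the second paragraph: it is only because the falling-factorial monomials span $\mathcal P$ that $M$ is the coordinate matrix of the evaluation map $p \mapsto (p(\vvec a^1), \dots, p(\vvec a^n))$ on $\mathcal P$, and it is precisely this identification that converts the invertibility of $M$ into uniqueness within degree at most $N$. The triangularity computation is what makes that rigorous; everything else is bookkeeping.
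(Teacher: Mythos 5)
Your proof is correct and follows essentially the same route as the paper's: existence of an interpolating polynomial from finiteness of $A_{\vvec z}$, and uniqueness within degree at most $N$ from the invertibility of $M$ via the linear system $M\vvec c = \vvec b$. The paper's own proof is terser --- it asserts existence without the explicit Lagrange construction and does not spell out that the falling-factorial monomials $\{\vvec x^{(\vvec x^j)}\}$ form a basis of the degree-$\le N$ polynomials --- so your two added justifications (the product interpolant $L_k$ and the triangular change-of-basis argument) fill in exactly the steps the paper leaves implicit.
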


\begin{proof}
We can find a polynomial $\overline \lambda_{\vvec z}$ such that
(\ref{eq:find poly}) is satisfied because the set
$A_{\vvec z}$ is finite for each transition vector 
$\vvec z\in \cal Z$ and $|\cal Z| < \infty.$
Suppose that matrix $M$ is invertible. Note that for each 
$\vvec z\in \cal Z$, we let $\vvec c \in \mathbb{R}^n$ 
such that 
\begin{equation}
\vvec c
=
M^{-1}\vvec b, 
\quad \text{where}
\quad 
b_i= \overline \lambda_{\vvec z}({\vvec a}^i) 
\quad \mbox{for each} \quad i=1,2,\dots,n.
\label{cMb}
\end{equation}
Then the degree $N$ polynomial  
$\overline  \lambda_{\vvec z} (\vvec x)$ 
is uniquely written as
\[ 
\overline  \lambda_{\vvec z}(\vvec x)
=
\sum_{j=1}^n
c_j \, \vvec x^{(\vvec x^j)}.
\]
\end{proof}

\noindent
For a given CTMC, our final goal of this section is to identify 
a unique mass-action stochastic system that has the same transition 
rates as the given CTMC admits. By applying 
Lemma~\ref{lem:find polynomial}, we can construct a CMTC whose 
transition rates are polynomials and have the same values as the 
given transition rates. However, not every CTMC with polynomial 
rates is associated with a mass-action reaction network. Negative
coefficients cause problems as it is the case of polynomial ODE 
models which cannot be written as chemical reaction
systems~\cite{Plesa:2016:CRS}. In the case of CTMC the situation 
is even more restrictive. To formulate the theorem
characterizing which CTMC with polynomial transition rates 
can be identified as a mass-action reaction system,
we denote by $D_i(\lambda)$ the minimum power of $x_i$ in the 
polynomial $\lambda(1,1,\dots, x_i,\dots,1)$, where
$\lambda : \mathbb{Z}^d \to \mathbb{R}$ is a polynomial.
For instance, if $\lambda( x_1,x_2)= x_1^3 x_2^2+ x_1$, 
then $D_1(\lambda)=1$ and $D_2(\lambda)=2$.

\begin{theorem}\label{thm:polynomial and reaction network}
Let $\vvec X(t)$ be a CTMC defined on $\mathbb{Z}^d_{\ge 0}$ with the set 
of transition vectors $\cal Z$. Suppose that each transition rate
$\lambda_{\vvec z}$ of $\vvec X(t)$ associated with $\vvec z \in \cal Z$ 
is a polynomial of degree $N$ such that
\begin{align}
\label{eq:poly decompose}
\lambda_\vvec z(\vvec x)
=
\sum_{j=1}^n
c_j \, \vvec x^{(\vvec x^j)}
\text{for some constants $c_j\ge 0$},
\end{align}
where $n=|\mathbb S_N|$ and elements of $\mathbb S_N$ are
denoted by $(\ref{elemSN})$. Suppose further that
\begin{align}\label{eq:polynomial condition}
|z_i| \le D_i(\lambda_\vvec z) \quad \text{if $z_i < 0$}.
\end{align} 
Then there exists a unique mass-action reaction system 
such that the associated mass-action stochastic model 
is equal to the CTMC $\vvec X(t)$. 
\end{theorem}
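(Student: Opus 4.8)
The plan is to mimic the constructive argument of Theorem~\ref{thm:theoretical network inference}, now reading the reactions directly off the falling-factorial expansion supplied by hypothesis~(\ref{eq:poly decompose}). For each transition vector $\vvec z \in \mathcal Z$, Lemma~\ref{lem:find polynomial} (with the falling factorials serving as a basis for the degree-$\le N$ polynomials) guarantees that the coefficients $c_j=c_j^{\vvec z}\ge 0$ in $\lambda_{\vvec z}(\vvec x)=\sum_{j=1}^n c_j^{\vvec z}\,\vvec x^{(\vvec x^j)}$ are uniquely determined. I would then declare, for every $\vvec z$ and every index $j$ with $c_j^{\vvec z}>0$, a reaction $\vvec x^j \to \vvec x^j+\vvec z$ carrying rate constant $c_j^{\vvec z}$; by~(\ref{eq:sto mass}) its mass-action intensity is exactly $c_j^{\vvec z}\,\vvec x^{(\vvec x^j)}$. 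Setting $\Rx$ and $\Ki$ to be the union of these reactions and intensities over all $\vvec z\in\mathcal Z$, the total rate in direction $\vvec z$ telescopes to $\sum_j c_j^{\vvec z}\,\vvec x^{(\vvec x^j)}=\lambda_{\vvec z}(\vvec x)$ for every $\vvec x$, so the mass-action CTMC associated with $(\Rx,\Ki)$ and $\vvec X(t)$ share all transition rates on $\ZZ^d_{\ge 0}$.

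The one nontrivial point --- and where I expect the real work to lie --- is that each proposed reaction must be legitimate, i.e.\ its product complex $\vvec x^j+\vvec z$ must lie in $\ZZ^d_{\ge 0}$. This is automatic in every coordinate $i$ with $z_i\ge 0$, so the issue is confined to coordinates with $z_i<0$, where I must show that every active source obeys $x^j_i\ge|z_i|$. The operative fact here is that $\vvec X(t)$ is a bona fide CTMC on $\ZZ^d_{\ge 0}$: a jump by $\vvec z$ can never exit the orthant, so $\lambda_{\vvec z}(\vvec x)=0$ at every state with $x_i<|z_i|$, and condition~(\ref{eq:polynomial condition}), which bounds $|z_i|$ by the lowest power $D_i(\lambda_{\vvec z})$ of $x_i$ occurring in $\lambda_{\vvec z}$, is the algebraic encoding of this vanishing. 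To convert the vanishing into a bound on the individual source complexes I would argue by positivity: fix $i$ with $z_i<0$, set $m=\min\{x^j_i : c_j^{\vvec z}>0\}$, and evaluate $\lambda_{\vvec z}$ at a state $\vvec w$ with $w_i=m$ and all remaining coordinates so large that $\vvec w\ge\vvec x^j$ for each active source attaining this minimum. By the standard property~(\ref{eq:standard property}) only the sources with $x^j_i=m$ survive, and each contributes a strictly positive term $c_j^{\vvec z}\,\vvec w^{(\vvec x^j)}>0$, whence $\lambda_{\vvec z}(\vvec w)>0$. Since $\lambda_{\vvec z}$ must vanish whenever $w_i<|z_i|$, this forces $m\ge|z_i|$, and therefore $x^j_i\ge|z_i|$ for all active sources, making every reaction valid. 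The delicate part is precisely this passage from a condition on the polynomial $\lambda_{\vvec z}$ to a bound on the separate source coordinates, since cancellations in converting the falling-factorial basis to monomials can hide a small source coordinate; the probing evaluation above is engineered to sidestep them by isolating the minimal layer $\{x^j_i=m\}$.

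Finally, uniqueness follows from the same basis structure already exploited in Lemma~\ref{lem:find polynomial} and Lemma~\ref{lem:same order systems are same}. Any mass-action system reproducing $\vvec X(t)$ must, for each $\vvec z$, present its rate in direction $\vvec z$ as $\sum_{\vvec y}\kappa_{\vvec y}\,\vvec x^{(\vvec y)}$; matching this to the degree-$N$ polynomial $\lambda_{\vvec z}$ and using that the falling factorials of degree at most $N$ are linearly independent pins down the nonzero coefficients to be exactly the $c_j^{\vvec z}$ at the sources $\vvec x^j$. This determines the reactions and their rate constants uniquely, which together with the existence construction above completes the argument.
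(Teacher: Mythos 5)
Your skeleton coincides with the paper's proof: for each $\vvec z\in\mathcal Z$ and each $j$ with $c_j>0$ the paper likewise declares the reaction $\vvec x^j\to\vvec x^j+\vvec z$ with mass-action intensity $c_j\,\vvec x^{(\vvec x^j)}$, takes unions over $\mathcal Z$, and settles uniqueness by the uniqueness of the decomposition \eqref{eq:poly decompose}. Where you genuinely diverge is in justifying that every proposed product complex $\vvec x^j+\vvec z$ lies in $\ZZ^d_{\ge 0}$. The paper disposes of this in one line, asserting that condition \eqref{eq:polynomial condition} alone implies $x^j_i+z_i\ge 0$ for every term; you instead derive $x^j_i\ge|z_i|$ from the fact that the chain cannot leave the orthant, via the probing state $\vvec w$ with $w_i=m$ and the remaining coordinates large. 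Your route is the more robust one: the paper's implication is not a purely algebraic consequence of \eqref{eq:polynomial condition} --- in one dimension, $\lambda_{\vvec z}(x)=x^{(1)}+x^{(2)}=x^2$ with $z=-2$ satisfies $|z|\le D_1(\lambda_{\vvec z})=2$, yet the active source $x^1=1$ yields an invalid product complex --- and it is exactly the no-escape property you invoke (which forces $\lambda_{\vvec z}$ to vanish wherever $x_i<|z_i|$ and so excludes such examples as CTMCs on $\ZZ^d_{\ge 0}$) that makes the step sound; your isolation of the minimal layer $\{x^j_i=m\}$ correctly defuses the possible cancellations between falling factorials. Two small blemishes, neither fatal: the unique determination of the $c_j$ follows from the falling factorials $\vvec x^{(\vvec x^j)}$, $j=1,\dots,n$, forming a basis of the degree-$\le N$ polynomials, not from Lemma~\ref{lem:find polynomial} (an interpolation statement that needs an invertible $M$); and in the uniqueness step you should also rule out a competitor using source complexes outside $\mathbb S_N$, which again follows from linear independence of the full family of falling factorials.
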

\begin{proof}
Let $\vvec z \in \cal Z$ be fixed. Then the associated transition 
rate $\lambda_{\vvec z}$ is given by~\eqref{eq:poly decompose}.  
Note that equation~\eqref{eq:polynomial condition} implies that 
$x^j_i + z_i \ge 0$ for every term $\vvec x^{(\vvec x^j)}$ 
in~\eqref{eq:poly decompose}. Therefore we define 
$
\Rx^{\vvec z}
=\{\vvec x^j \to \vvec x^j +\vvec z \, | \, c_j > 0
\}
$ 
and 
$
\Ki^{\vvec z}
=
\{\lambda_{\vvec  x^j \to \vvec x^j + \vvec z}(\vvec x) 
=
c_j \, \vvec x^{({\vvec x}^j)} 
\; | \; c_j > 0\}
$. 
Then 
$$
\lambda_\vvec z(\vvec x)
=
\sum_{\vvec x^j \to \vvec x^j + \vvec z \in \Rx_\vvec z} 
\lambda_{\vvec x^j \to \vvec x^j + \vvec z}(\vvec x). 
$$
Considering $\Rx^\vvec z$ and $\Ki^\vvec z$ obtained for each 
$\vvec z \in \cal Z$, we define
$\Rx=\bigcup_{\vvec z\in \cal Z}\Rx^\vvec z$ 
and $\Ki=\bigcup_{\vvec z \in \cal Z}\Ki^\vvec z$. 
The associated CTMC for $(\Rx,\Ki)$ has the same transition 
rates as $X$ has. Uniqueness follows since the 
decomposition~\eqref{eq:poly decompose} is unique.
\end{proof}

\noindent
Suppose a given CTMC satisfies the conditions in  
Lemma~\ref{lem:find polynomial} and that the transition 
rates of the CTMC satisfy the conditions~\eqref{eq:poly decompose} 
and~\eqref{eq:polynomial condition} in 
Theorem~\ref{thm:polynomial and reaction network}. Then we can 
infer a reaction network whose associated CTMC has the same 
transition vectors and the same transition rates at each state 
in $A_\vvec z$ for each transition vector $\vvec z$. 
We demonstrate this using the following example.

\begin{example}
Let $\vvec  X(t)$ be a CTMC defined on $\mathbb Z^2_{\ge 0}$. 
Suppose that it is known that $\vvec  X(t)$ admits three transition 
vectors $\vvec z^1=(1,0)^\top$,  $\vvec z^2=(-1,1)^\top$ 
and $\vvec z^3=(0,-1)^\top$. We are also given information 
on the transition rates of $\vvec X(t)$ as 
\begin{align}
&\lambda_{\vvec z^1}(10,10)=1, 
\nonumber \\
&\lambda_{\vvec z^2}(10,10)=20, 
\qquad
\lambda_{\vvec z^2}(9,11)=18, 
\qquad
\lambda_{\vvec z^2}(9,10)=18, 
\label{exdata} \\
&\lambda_{\vvec z^3}(8,11)=33, 
\qquad\;\, \lambda_{\vvec z^3}(8,10)=30, 
\qquad \lambda_{\vvec z^3}(7,11)=33.
\nonumber
\end{align}
Using Lemma~$\ref{lem:find polynomial}$, we first find a CTMC 
$\overline{\vvec X}(t)$ with polynomial transition rates. 
Using the notation of Lemma~$\ref{lem:find polynomial}$ for
the first transition vector $\vvec z^1$, we
have $A_{\vvec z^1}=\{(10,10)\}$ such that $n=1=|\mathbb S_0|$,
matrix $M$ is scalar $M=1$ and `vector' $\vvec b$ 
is a scalar as well, $\vvec b=\lambda_{\vvec z^1}(10,10)=1$ .
Thus the polynomial transition 
rate $\lambda_{\vvec z^1}$ is a constant given by~$(\ref{cMb})$
as $\lambda_{\vvec z^1}=M^{-1}\vvec b=1$.
Considering transition vectors $\vvec z^2$ and $\vvec z^3$,
we have
$$
A_{\vvec z^2}=\{(10,10), \, (9,11), \, (9,10)\},
\quad
\mbox{and}
\quad
A_{\vvec z^3}=\{(8,11), \, (8,10), \, (7,11) \}.
$$
Since $|\mathbb S_1|=3$,  we find linear transition rate 
$\lambda_{\vvec z^2}$ (resp. $\lambda_{\vvec z^3}$)
of $\overline{\vvec X}(t)$ that have the values~$(\ref{exdata})$ 
at $A_{\vvec z^2}$ (resp. $A_{\vvec z^3}$). 
The $3\times 3$ matrix $M$ is given as
$$
M
=
\begin{bmatrix}
1 & 10 & 10\\
1 & 11 & 9\\
1 & 10 & 9
\end{bmatrix},
\qquad 
\mbox{respectively,} \qquad
M
=
\begin{bmatrix}
1 & 11 & 8 \\
1 & 10 & 8 \\
1 & 11 & 7
\end{bmatrix}.
$$
Since both matrices are invertible, we can calculate $\vvec c$ by
$(\ref{cMb})$, where $\vvec b = (20, 18, 18)^\top$, respectively
$\vvec b = (33, 30, 33)^\top.$ We obtain 
$\vvec c = M^{-1} \vvec b = (0,0,2)^\top$ for the transition
vector $\vvec z^2$ and 
$\vvec c = M^{-1} \vvec b = (0,3,0)^\top$ for the transition
vector  $\vvec z^3.$ Therefore, we obtain
$$
\lambda_{\vvec z^1} = 1,
\qquad
\lambda_{\vvec z^2}(\vvec x)=2x_1,
\qquad
\lambda_{\vvec z^3}(\vvec x)=3x_2.
$$
Next, we find a reaction network whose associated mass-action dynamics 
is equal to the CTMC $\overline{\vvec X}(t)$. 
The conditions~$(\ref{eq:poly decompose})$ and~$(\ref{eq:polynomial condition})$
of Theorem~$\ref{thm:polynomial and reaction network}$ are satisfied
for all three transition vectors $\vvec z^1$, $\vvec z^2$ and 
$\vvec z^3$. Thus the unique reaction system is
$$
\emptyset 
\; \mathop{\longrightarrow}^1 \;
X_1
\; \mathop{\longrightarrow}^2 \;
X_2
\; \mathop{\longrightarrow}^3 \;
\emptyset \, .
$$
\end{example}

\begin{example}
Consider the reaction system~$(\ref{eq:example network1})$ introduced 
in Example~$\ref{ex:non unique1}$. Let $\vvec z=(1,-1)^\top$ be one 
of the two transition vectors of the CTMC $\vvec X(t)$. Given the
transition rates~$(\ref{extransitionrates})$ on $\mathbb S_{\vvec v,2}$,
the first order reaction $X_2\to X_1$ is not identified 
using Theorem~$\ref{thm:polynomial and reaction network}$,
because matrix $M$ associated with states $\mathbb S_{\vvec v,2}$
is the singular matrix
$$
M
=
\begin{pmatrix}
1 & 2 & 0\\
1 & 1 & 1\\
1 & 0 & 2
\end{pmatrix}.
$$
\end{example}

\section{Inference of Reaction Networks using Temporal Data}

Theorem~\ref{thm:theoretical network inference} states that we can 
use transition rates and transition vectors of a mass-action stochastic
reaction system to uncover the underlying network structure. 
However, in applications, we are not given directly the transition
rates but temporal data consisting of states and transition times 
between them. For example, for an (a priori unknown) underlying 
network
$$
X_1 \, \xrightarrow{\;1\;} \, 2X_1, 
\qquad 
X_1+X_2 \, \xrightarrow{\;1\;} \, 2X_2,
$$
we are given transition data of the associated CTMC $\vvec X(t)$ such as 
\begin{align*}
\vvec X(0)=(1,1), \ \vvec X(\tau_1)=(2,1),  
\ \vvec X(\tau_2)=(1,2), \dots, 
\quad \text{and} \quad \tau_1=0.2, \ \tau_2=1.1,\dots,    
\end{align*}
where $\tau_i$ is the $i$-th transition time. Thus, to apply results
of the previous section, we need to use such time series to estimate
the transition vectors $(1,0)^\top$ and $(-1,1)^\top$ and
the corresponding transition rates 
$\lambda_{(1,0)}(\vvec x)= x_1$ and 
$\lambda_{(-1,1)}(\vvec x)= x_1 x_2$.

Suppose that we are given $Q$ sample trajectories of the CTMC 
$\vvec X(t)$ consisting of the states of the system 
$\vvec X^i(\tau_k^i)$, for $i=1,2,\dots,Q$, recorded
at times $\tau_k^i$, where $k=1,2,\dots,q(i),$ and $q(i)$ 
denotes the number of time points in the $i$-th time series.
Assuming that the given time series includes all reaction events,
the time of the $k$-th transition of the CTMC $\vvec X^i(t)$
is equal to $\tau_k^i$. Then all possible transition vectors $\vvec z$ 
of the system can be uncovered (as long as they are present 
in the recorded time series) by collecting the transitions 
$\vvec X^i(\tau_{k+1}^i)-\vvec X^i(\tau_k^i)$ for all 
$k=1,2,\dots,q(i)$ and $i=1,2,\dots,Q.$ 

Next, we estimate the transition rates at each state $\vvec x$ by 
using the sample trajectories. Let CTMC $\vvec X(t)$ be associated with 
reaction system $(\Rx, \Ki)$ and let $\cal Z$ be the finite set 
of transition vectors. Then by using the random time
representation~\cite{Kurtz:1972:RBS,Anderson:2015:SAB},
we have
$$
\vvec X(t)
=
\vvec X(0)
+
\sum_{\vvec z \in \cal Z} 
Y_{\vvec z} 
\left( 
\int_0^t  \lambda_{\vvec z}(\vvec X(s)) ds \right ) 
\vvec z,
$$
where $Y_{\vvec z}$ are independent unit Poisson processes. Therefore
$$
{\mathbb E}(\tau_{k+1} \ | \ \vvec X(\tau_{k}) = \vvec x)
=
\frac{1}{\lambda(\vvec x)},\quad 
\text{where} 
\quad
\lambda(\vvec x)=\sum_{\vvec z\in \cal Z} \lambda_{\vvec z}(\vvec x), 
$$
\vskip -2mm
\noindent 
and
\vskip -4mm
\begin{equation}
P \left(
\vvec X(\tau_{k+1})
= \vvec x+\vvec z\ 
\big| \ \vvec X(\tau_{k})=\vvec x \right)
=
\dfrac{\lambda_\vvec z(\vvec x)}{\lambda(\vvec x)}.
\label{lambdaz}
\end{equation}
To estimate $\lambda_{\vvec z}(\vvec x)$ at each state $\vvec x$,
we identify the data points when this state was reached by defining 
$G_{\vvec x}=\{ (i,k) \,|\, \vvec X^i (\tau_k^i) =\vvec x
\mbox{ and } k < q(i) \}$. 
Then, for each state $\vvec x$ and for each transition vector
$\vvec z$, we use
\begin{align}
\lambda_\vvec z(\vvec x)
=
\frac{\lambda_\vvec z (\vvec x)}{\lambda(\vvec x)}\lambda(\vvec x)
&=
\frac{
P \left(
\vvec X(\tau_{k+1})
= \vvec x+\vvec z\ 
\big| \ \vvec X(\tau_{k})=\vvec x \right)
}{
{\mathbb E}(\tau_{k+1} \ | \ \vvec X(\tau_{k}) = \vvec x)
}
\nonumber \\
&\approx \frac{\sum_{(i,k) \in G_{\vvec x}}
\mathbbm{1}_{\{\vvec X^i(\tau_{k+1}^i)-\vvec X^i(\tau_{k}^i) = \vvec z\}}}
{\sum_{(i,k)\in G_{\vvec x}} \tau^i_{k+1}},
\label{eq:estimate intensity}
\end{align}
where we assume that $|G_{\vvec x}|$ is sufficiently large to get
a good approximation. 

\begin{example}
Let $(\Rx,\Ki)$ be the following one-species mass-action reaction system,
\begin{align*}
3 X_1 
\; \xrightarrow{\;3\;} \;
4 X_1 
\; \xrightarrow{\;10\;} \;
\emptyset 
\; \xrightarrow{\;1\;} \;
X_1 
\; \xrightarrow{\;2\;} \;
2X_1.
\end{align*}
For the transition `vector' $\vvec z=1$, the transition rate of 
the associated CTMC $\vvec X(t)$ is 
$$
\lambda_\vvec z(x_1)
= 
1 + 2 x_1 + 3 x_1 (x_1-1)(x_1-2).
$$ 
Using the Gillespie SSA, we generate $Q = 10^2$ independent 
sample time trajectories 
of this system each of which contains $q(i)=10^3$ transition 
times $\tau^i_k$ and the corresponding states
$X_1^i(\tau^i_k)$, for $k=1,2,\dots,10^3$ and $i=1,2,\dots,10^2.$
Applying~$(\ref{eq:estimate intensity})$, we obtain for the state
$x_1 = 4$ the estimated transition rate $\lambda_{\vvec z}(4) = 80.871$, 
which compares well with the true transition rate 
$\lambda_{\vvec z}(4)=81$.
\end{example}

\subsection{Distance between two reaction systems}

For a given (unknown) mass-action reaction system $(\Rx,\Ki)$, suppose 
we know the number of species and the order of the network. 
Suppose further that we use transition data associated with $(\Rx,\Ki)$ 
to estimate the transition rates of $(\Rx,\Ki)$ by 
equation~(\ref{eq:estimate intensity}). Then we can use the 
estimated transition rates to infer a reaction system 
$(\overline \Rx, \overline \Ki)$ by 
applying Theorem~\ref{thm:theoretical network inference}. 
In this section, we discuss how we can measure the accuracy of the 
inferred reaction system $(\overline \Rx, \overline \Ki)$ by 
comparing to the original system $(\Rx,\Ki)$.

\begin{definition}
For two reaction systems $(\Rx,\Ki)$ and $(\overline \Rx,\overline \Ki)$ 
defined on $\mathbb Z^d_{\ge 0}$, their distance
at time $t$ is defined as the total variance distance as
$\Vert p(\cdot,t)-\overline{p}(\cdot,t)\Vert_{TV},$
where $p(\vvec x,t) = P(\vvec X(t) = \vvec x)$ 
and $\overline{p}(\vvec x,t)=P(\overline{\vvec X}(t) = \vvec x)$ 
are the probability distributions of the stochastic systems $\vvec X(t)$ 
and $\overline{\vvec X}$ associated with $(\Rx,\Ki)$ 
and $(\overline \Rx,\overline \Ki)$, respectively.
In particular, we measure the similarity of the two reaction systems on 
a finite set $U$ with their distance at time $t$ with respect 
to a finite set $U$, which we define as
\begin{align*}
\delta_{U}=
\frac{1}{2}
\sum_{\vvec x\in U} \big|p(\vvec x,t)-\overline{p}(\vvec x,t) \big|.
\end{align*}
\end{definition}

\noindent
An alternative distance can also be defined by measuring the difference
between the reaction intensities of $(\Rx,\Ki)$ and 
$(\overline\Rx,\overline \Ki)$ over a fixed finite set.
\begin{definition}
For two reaction systems $(\Rx,\Ki)$ and $(\overline \Rx,\overline \Ki)$ 
defined on $\mathbb Z^d_{\ge 0}$, let $\vvec X(t)$ and $\vvec X(t)$ be 
the associated CTMCs with the set of transition vectors $\cal Z$ and
$\overline{\cal Z}$, respectively. Let further that $\lambda_{\vvec z}$ 
and $\overline \lambda_{\bar{\vvec z}}$ be the transition rates associated
with transition vectors $\vvec z\in \cal Z$ and 
$\bar{\vvec z} \in \cal \overline{\cal Z}$, respectively.
Then for a fixed finite set $U$, we define
$$
\delta^I_U
=
\max_{\vvec x\in U} 
\left \{ \max_{\vvec z\in \cal Z\cap \overline{\cal Z}}
\left|
\lambda_\vvec z(\vvec x)-\overline \lambda_\vvec z(\vvec x)
\right|, \;
\max_{\vvec z\in \cal Z} \lambda_\vvec z(\vvec x), 
\;
\max_{\bar{\vvec z}\in \overline{\cal Z}}  
\overline \lambda_{\bar{\vvec z}}(\vvec x) 
\right \}.
$$
\end{definition}

\noindent
Both the distances $\delta_U$ and $\delta^I_U$ measure the similarity of 
two reaction systems confined to a finite set $U$.
For a given (unknown) reaction system of order $N$, 
we can apply Theorem~\ref{thm:theoretical network inference} to infer 
a network system by using the transition data over $U=\mathbb S_N$. 
Then we can test with either $\delta_U$ or $\delta^I_U$ 
how close the inferred network is to the original reaction system. 
The following example demonstrates this process.

\begin{example}
\label{ex:gillespie}
Consider the following mass-action reaction system of order $3$:
\begin{equation}
\begin{split}
& 
\qquad \qquad
X_1 \; \xrightleftarrows{1}{1}{4 mm} \; \emptyset,
\qquad
X_2 \; \xrightleftarrows{1}{1}{4 mm} \; \emptyset,
\\
&2 X_1 + X_2 
\; \xrightarrow{\;\; 1 \;\;} \; 
\emptyset
\; \xrightarrow{\;\; 1 \;\;} \; 
X_1 + X_2
\; \xrightarrow{\;\; 1 \;\;} \;
2 X_1 + 2 X_2
\end{split}
\label{exorig}
\end{equation}
We use the Gillespie SSA to simulate
the reaction system~$(\ref{exorig})$ until we collect 
$15.625\times 10^5$ sample transition times $\tau^i_k$ for 
each state $\vvec x^j \in \mathbb S_3$, where $j=1,2,\dots,10$. 
Then we estimate the transition rates by~$(\ref{eq:estimate intensity})$
and apply Theorem~$\ref{thm:theoretical network inference}$ 
with the estimated transition rates over $\mathbb S_3$. 
We obtain the mass-action reaction system which contain both
original reactions (with modified rate constants)
\begin{equation}
\begin{split}
& 
\qquad \qquad \quad
X_1 \; \xrightleftarrows{0.9999}{1.0008}{10 mm} \; \emptyset,
\qquad
X_2 \; \xrightleftarrows{1.0025}{0.9996}{10 mm} \; \emptyset,
\\
&2 X_1 + X_2 
\; \xrightarrow{\;\; 0.9994 \;\;} \; 
\emptyset
\; \xrightarrow{\;\; 1.0002 \;\;} \; 
X_1 + X_2
\; \xrightarrow{\;\; 1.0027 \;\;} \;
2 X_1 + 2 X_2,
\end{split}
\label{part1network}
\end{equation}
and additional reactions (with relatively small rate constants)
\begin{equation}
\begin{split}
&
2 X_1 + X_2 
\; \xrightarrow{\;\; 0.0022 \;\;} \;
X_1 + X_2
\; \xrightarrow{\;\; 0.0013 \;\;} \; 
X_2
\; \xrightarrow{\;\; 1.3 \times 10^{-4} \;\;} \; 
2 X_2
\; \xrightarrow{\;\; 5.3 \times 10^{-4} \;\;} \; 
3 X_2
\\
&
\qquad \qquad
2 X_1
\; \xrightarrow{\;\; 5.2 \times 10^{-4} \;\;} \; 
X_1,
\qquad
2 X_1 
\; \xrightarrow{\;\; 4.9 \times 10^{-4} \;\;} \;
2 X_1 + X_2,
\end{split}
\label{part2network}
\end{equation}
where the reactions in $(\ref{part2network})$ are
the reactions in $\overline \Rx\setminus \Rx$. 
To compare the original reaction system~$(\ref{exorig})$ with
the inferred reaction system~$(\ref{part1network})$--$(\ref{part2network})$,
we first estimate the distance $\delta_U$ by computing the empirical 
measures with $10^4$ realisations of the Gillespie SSA. We obtain 
$\delta_U=0.0083$ (for a larger set $U'=\mathbb S_{100}$, we get 
$\delta_{U'}=0.0244$). The alternative distance $\delta^I_U$ can 
also be computed using the mass-action intensities of the reaction 
systems as $\delta^I_{U}=0.0090$ (for the larger set 
$U'=\mathbb S_{100}$, we get $\delta^I_{U'}=331.9268$).
Mean trajectories of species $X_1$ and $X_2$ in the original 
reaction system~$(\ref{exorig})$ and the inferred reaction
network~$(\ref{part1network})$--$(\ref{part2network})$ are shown 
in Figure~$\ref{fig1}$.

\begin{figure}[tp]
\leftline{(a) \hskip 7.9cm (b)}
\centerline{\hskip 8mm \includegraphics[height=6.2cm]{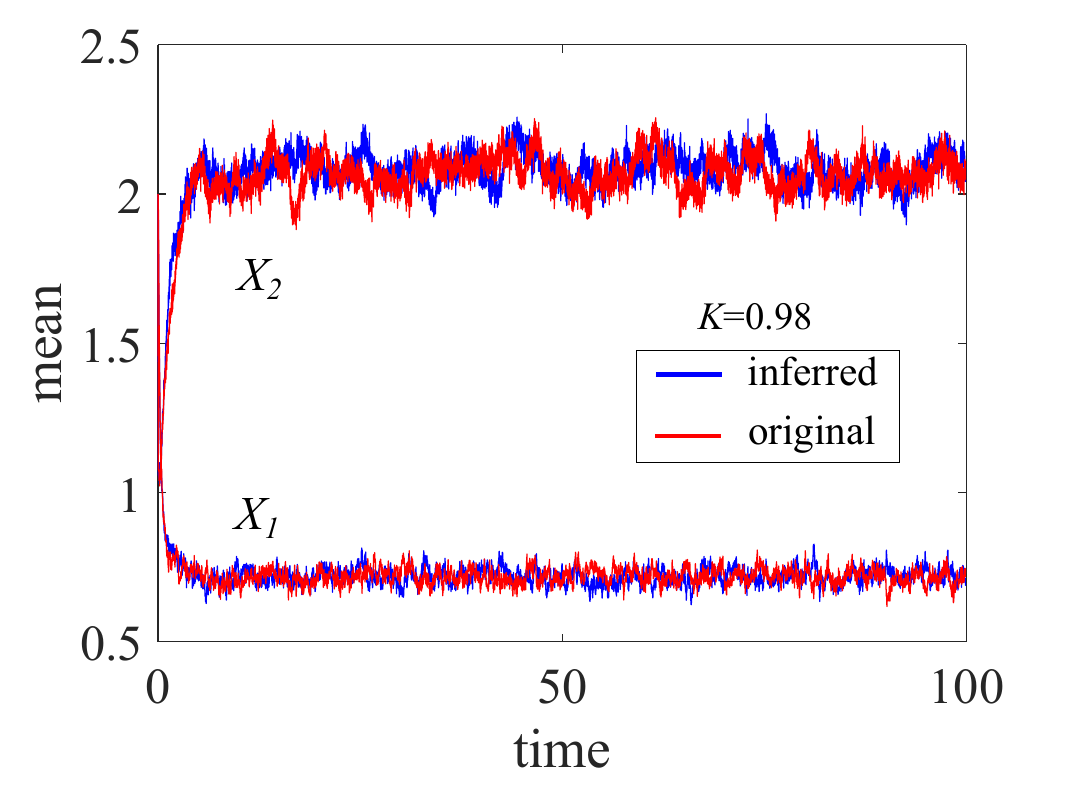}
\includegraphics[height=6.2cm]{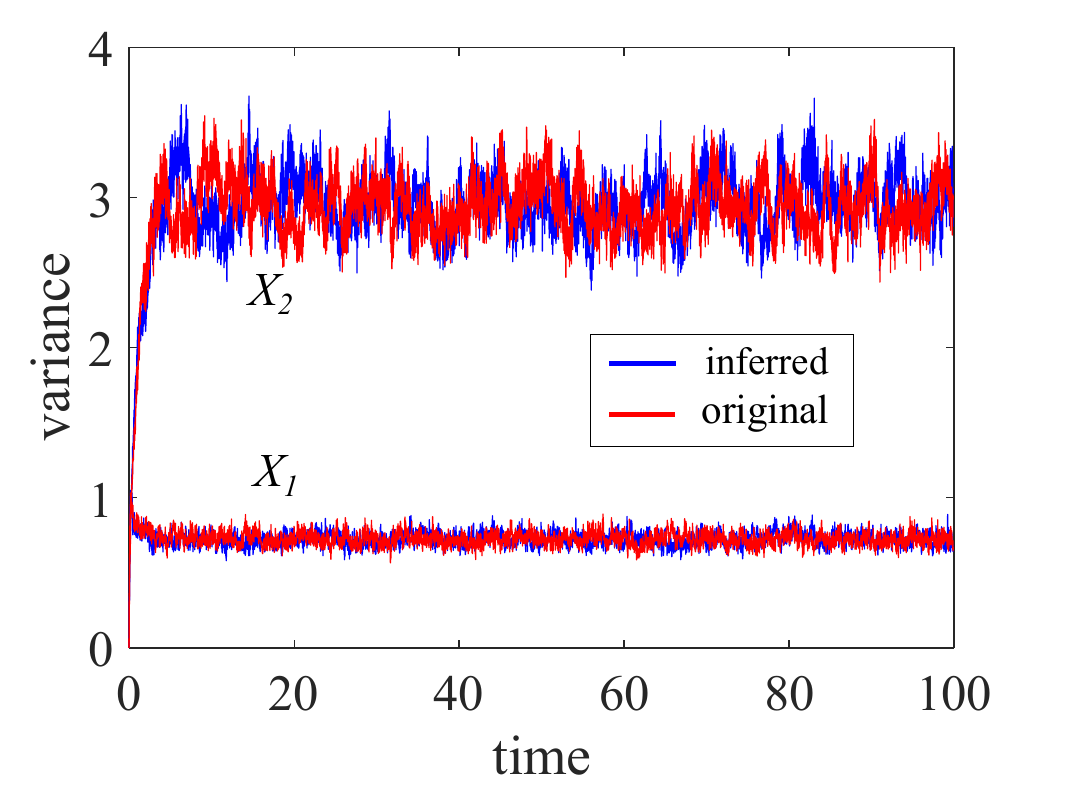}}
\caption{(a)
{\it Mean values of $X_1(t)$ and $X_2(t)$ of the reaction 
network~$(\ref{exorig})$ and the inferred reaction
network~$(\ref{part1network})$--$(\ref{part2network})$
obtained by averaging over $10^4$ realisations of the 
Gillespie SSA with initial condition $X_1(0)=1$ and
$X_2(0)=1$. The average number of transitions by 
the reactions~$(\ref{part2network})$ in 
$\overline \Rx\setminus \Rx$ is denoted by~$K$.} \\
(b) {\it The variance of $X_1(t)$ and $X_2(t)$ estimated
from the same time series.}
}
\label{fig1}
\end{figure}
\end{example}

\begin{remark}
As shown in Example~$\ref{ex:gillespie}$, the distance $\delta_U$ is 
robust to the size of $U$ because this distance is defined using 
the probability densities. However, the distance $\delta^I_U$ is 
sensitive to the choice of the set $U$ since the transition rates
$\lambda_{\vvec z}(\vvec x)$ and 
$\overline \lambda_{\bar{\vvec z}}(\vvec x)$ rapidly increase 
as $\Vert \vvec x \Vert_1$ is increased. 
\end{remark}

\subsection{Error Analysis}

For a given CTMC, the true underlying network structure and the true 
parameter values are often unknown. Thus the distance between the 
true network and the estimated network cannot be calculated. 
By using the central limit theorem, however, we can find confidence 
intervals for given stochastic simulation data to ensure that the 
alternative distance $\delta^I_{U}$ is less than some bound. 
Let $(\Rx,\Ki)$ be a given reaction system and let 
$\lambda_\vvec z(\vvec x)$ be the transition rate of the associated CTMC.
Note that 
$$
\lambda_\vvec z(\vvec x)
=
P \left(
\vvec X(\tau_{k+1})
= \vvec x+\vvec z\ 
\big| \ \vvec X(\tau_{k})=\vvec x \right)
\,
\lambda(\vvec x)
$$ as shown in \eqref{lambdaz}, where 
$\lambda(\vvec x) = \sum_{\vvec z\in \cal Z}\lambda_\vvec z(\vvec x)$ 
is the total intensity of the CTMC $\vvec X(t)$. 
Thus letting 
$
\overline \lambda(\vvec x)
=
|G_\vvec x| 
/
\left( \sum_{(i,k)\in G_\vvec x} \tau^i_{k+1} \right )$ 
be the sample mean of the total intensity, we define the sample 
transition rate for a transition vector $\vvec z$ as 
$$
\overline \lambda^{(i,k)}_\vvec z(\vvec x)
=
\mathbbm{1}_{\{\vvec X^i(\tau_{k+1}^i)-\vvec X^i(\tau_{k}^i) = \vvec z\}}
\, \overline \lambda(\vvec x)
.$$ 
Then the sample mean of the transition rate 
$\overline \lambda_\vvec z(\vvec x)$ can be computed as 
$$
\overline \lambda_\vvec z(\vvec x)
=
\frac{1}{|G_\vvec x|}\sum_{(i,k)\in G_{\vvec x}}
\mathbbm{1}_{\{\vvec X^i(\tau_{k+1}^i)-\vvec X^i(\tau_{k}^i) = \vvec z\}}
\, {\lambda}(\vvec x) 
\approx  
\frac{1}{|G_\vvec x|}\sum_{(i,k)\in G_{\vvec x}} 
\overline \lambda^{(i,k)}_\vvec z(\vvec x).
$$
Then by the central limit theorem, for $\varepsilon>0$
$$
P\left ( \lambda_\vvec z(\vvec x)-\varepsilon 
\le 
\overline \lambda_\vvec z(\vvec x) 
\le  \lambda_\vvec z(\vvec x)+\varepsilon \right )
\approx 
P\left (-\frac{\varepsilon\sqrt{|G_\vvec x|}}{\sigma_\vvec z(\vvec x)} 
\le 
Z 
\le \frac{\varepsilon\sqrt{|G_\vvec x|}}{\sigma_\vvec z(\vvec x)} \right ),
$$
where 
$$
\sigma^2_\vvec z(\vvec x)
=
\dfrac{1}{|G_\vvec x|-1}
\sum_{(i,k)\in G_\vvec x}
\left (\overline \lambda^{(i,k)}_\vvec z(\vvec x)
-\overline \lambda_\vvec z (\vvec x)\right )^2
$$ 
is the sample variance, and $Z$ is an independent standard normal 
random variable. Thus we can formulate the following proposition
on confidence intervals.

\begin{proposition}
Let $(\Rx,\Ki)$ be a reaction system. For a finite subset 
$A \subseteq \mathbb Z^d_{\ge 0}$, let 
$\overline \lambda_\vvec z(\vvec x)$ 
and $\sigma^2_\vvec z(\vvec x)$ be the sample mean and the sample 
variance for each transition vector $\vvec z \in \cal Z$ and 
$\vvec x\in A$, respectively. For some $0<\alpha<1$, suppose 
that $\varepsilon>0$ satisfies
\begin{equation}
\varepsilon
\ge 
\dfrac{z_\alpha \sigma_\vvec z(\vvec x)}{\sqrt{|G_\vvec x|}} 
\quad \text{for each $\vvec z \in \cal Z$ and for each $\vvec x \in A$},
\end{equation}
where $[-z_\alpha,z_\alpha]$ is the $(1-\alpha)$-confidence interval of 
a standard normal random variable, i.e.
$P(-z_\alpha \le Z \le z_\alpha)=1-\alpha$, where $Z$ is the standard
normal random variable. Then for the inferred reaction system 
$(\overline \Rx,\overline \Ki)$ obtained by 
Theorem~$\ref{thm:theoretical network inference}$ with the 
sample transition rates $\overline \lambda_{\vvec z}(\vvec x)$, 
the distance $\delta^I_U$ between 
$(\Rx,\Ki)$ and $(\overline \Rx,\overline \Ki)$ is less 
than $\varepsilon$ with $(1-\alpha) \times 100 \%$ accuracy. 
\end{proposition}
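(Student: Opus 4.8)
The plan is to reduce the bound on $\delta^I_U$ to the per-coordinate confidence statement derived from the central limit theorem immediately above the proposition, and to invoke the explicit construction in the proof of Theorem~\ref{thm:theoretical network inference} in order to identify the transition rates of the inferred system $(\overline\Rx,\overline\Ki)$ with the sample means $\overline\lambda_\vvec z$. First I would take $U=A$ and recall that, when the algorithm of Theorem~\ref{thm:theoretical network inference} is fed the sample rates $\overline\lambda_\vvec z(\vvec x)$, the resulting system reproduces those rates exactly on $U$: for every $\bar{\vvec z}\in\overline{\cal Z}$ and every $\vvec x\in U$ one has $\sum_{\vvec y\to\vvec y'\in\overline\Rx,\,\vvec y'-\vvec y=\bar{\vvec z}}\overline\lambda_{\vvec y\to\vvec y'}(\vvec x)=\overline\lambda_{\bar{\vvec z}}(\vvec x)$.

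Consequently each of the three quantities inside the definition of $\delta^I_U$ is, for a fixed state $\vvec x$ and fixed direction, bounded by a single absolute deviation $|\lambda_\vvec z(\vvec x)-\overline\lambda_\vvec z(\vvec x)|$: on $\cal Z\cap\overline{\cal Z}$ this is immediate, and a transition vector absent from one of the two systems is assigned rate $0$ on that side, so that $\lambda_\vvec z(\vvec x)=|\lambda_\vvec z(\vvec x)-0|$ and likewise for $\overline\lambda_{\bar{\vvec z}}(\vvec x)$. Hence it suffices to control the deviation $|\lambda_\vvec z(\vvec x)-\overline\lambda_\vvec z(\vvec x)|$ for each pair $(\vvec z,\vvec x)$.

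Next I would fix $\vvec z$ and $\vvec x\in A$ and apply the normal approximation stated just before the proposition, namely $P(\lambda_\vvec z(\vvec x)-\varepsilon\le\overline\lambda_\vvec z(\vvec x)\le\lambda_\vvec z(\vvec x)+\varepsilon)\approx P(-\varepsilon\sqrt{|G_\vvec x|}/\sigma_\vvec z(\vvec x)\le Z\le \varepsilon\sqrt{|G_\vvec x|}/\sigma_\vvec z(\vvec x))$. The hypothesis $\varepsilon\ge z_\alpha\sigma_\vvec z(\vvec x)/\sqrt{|G_\vvec x|}$ is equivalent to $\varepsilon\sqrt{|G_\vvec x|}/\sigma_\vvec z(\vvec x)\ge z_\alpha$, so by monotonicity of the normal coverage probability the right-hand side is at least $P(-z_\alpha\le Z\le z_\alpha)=1-\alpha$. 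Therefore $|\lambda_\vvec z(\vvec x)-\overline\lambda_\vvec z(\vvec x)|\le\varepsilon$ holds with probability at least $1-\alpha$, and combined with the previous reduction this shows each term contributing to the maximum that defines $\delta^I_U$ stays below $\varepsilon$ with $(1-\alpha)\times 100\%$ accuracy.

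The step that requires the most care is passing from this pointwise coverage to the maximum defining $\delta^I_U$: a naive union bound over all pairs $(\vvec z,\vvec x)$ would inflate the error level to roughly $|{\cal Z}|\,|A|\,\alpha$, so the claim as written should be read as a per-estimate confidence level, each individual transition-rate deviation (and hence each entry of the max) being controlled at level $1-\alpha$, rather than a simultaneous guarantee. I would state this interpretation explicitly, and note two standing assumptions that make the reduction in the second paragraph legitimate: that $|G_\vvec x|$ is large enough for the normal approximation to the sample mean to be accurate, and that each state is visited often enough that all true transition vectors are observed, so that $\overline{\cal Z}=\cal Z$ and no genuinely charged direction is missed by the estimator.
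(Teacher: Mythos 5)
Your argument matches the paper's: the proposition is stated there without a separate proof, its justification being exactly the central-limit-theorem derivation you invoke, and your reduction of $\delta^I_U$ to the pointwise deviations $\left|\lambda_{\vvec z}(\vvec x)-\overline{\lambda}_{\vvec z}(\vvec x)\right|$ via the exact-reproduction property of the algorithm in Theorem~\ref{thm:theoretical network inference} is the intended (if unstated) glue. Your two caveats are well taken and go beyond what the paper says: the $(1-\alpha)\times 100\%$ claim for the maximum over all pairs $(\vvec z,\vvec x)$ is really only a per-estimate guarantee unless $\alpha$ is replaced by something like $\alpha/(|{\cal Z}|\,|A|)$, and the standing assumption that every charged transition vector is actually observed is needed because otherwise the sample variance $\sigma_{\vvec z}(\vvec x)$ can vanish while the true rate does not, making the hypothesis on $\varepsilon$ vacuous for that direction.
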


\begin{example}
Consider again the inferred reaction
system~$(\ref{part1network})$--$(\ref{part2network})$ 
in Example~$\ref{ex:gillespie}$. Note that we have the sample 
transition rates $\overline \lambda_{\vvec z}(\vvec x)$ at 
each state $\vvec x\in \mathbb S_3$ and for each transition 
vector $\vvec z$. Hence we can calculate the sample variance.
We obtain
\begin{center}
\begin{tabular}{ c c c c c c } 
\hline \\ [-0.3cm]
$\vvec x$ & $(0,0)$ & $(1,0)$ & $(2,1)$ & $\dots$ & $(3,0)$ \\[1ex]
\hline  \\ [-0.1cm]
$\vvec z$ & $(1,1)^\top$ & $(1,1)^\top$ & $(-1,0)^\top$ & $\dots$ 
& $(-1,0)^\top$ \\[1ex]
\hline \\ [-0.1cm]
$\overline \lambda_\vvec z (\vvec x)$ & $1.002$ & $0.9984$ & $2.0073$ & 
$\dots$ & $2.9975$ \\[1ex]
\hline \\ [-0.1cm]
$\sigma_\vvec z(\vvec x)$ & $2.007$ & $2.9966$ & $8.0045$ & $\dots$ & 
$2.9972$ \\[1ex]
\hline
\end{tabular}
\end{center}
For $\alpha=0.05$, we have $z_\alpha=1.96$. Hence if we 
let 
$
\varepsilon
=
0.0141
=
\displaystyle\max_{\vvec x \in \mathbb S_3,\vvec z \in \cal Z} 
z_\alpha \sigma_\vvec z(\vvec x) |G_\vvec x|^{-1/2}$, then
the distance $\delta^I_{\mathbb S_3}$ between the given system and 
the estimated reaction system is less than $0.0141$ with 
$95\%$ accuracy.
\end{example}

\section{Discussion}

In this paper we have explored identifiability of reaction systems. 
Identifiability of a stochastic reaction system $(\Rx,\Ki)$ holds if this is 
the only set of reactions that produces its transition rates on the
corresponding state space. Therefore identifiability of a reaction system 
must be verified \textit{prior to} inference of a network structure and 
parameter estimation. By using the fact that a mass-action system is 
fully characterized with the transition rates on a certain finite region, 
we proved that any stochastic mass-action system of order at most $N$ 
is identifiable as long as the associated state space contains~$\mathbb S_{N}$.

By using the mass-action property, we have also proposed an algorithm 
that enables us to infer the underlying reaction network and the associated 
parameters with the transition data of a given CTMC. In the case that the 
transition data are given by stochastic simulations, we have investigated 
how to approximate the true transition data, and in turn, how to infer 
an estimated underlying network. Then by using the confidence intervals, 
we can measure the accuracy of the estimated underlying network comparing 
to the true network.

The presented network inference method relies on the exact transition data 
consisting of the transition vectors and the transition times. Hence 
our method is not directly applicable to data that consists of partial
information of the system at discrete time points. However we have shown 
that as the transition information and confidence on transition rate 
estimates increases, the distance between the actual and approximated 
networks tends to decrease. Given that increasingly precise measurements 
are being made for specific reaction networks in experimental studies, 
we expect that our method can be used in the future to infer underlying
networks and kinetic parameters for realistic biological systems.  

\section*{Acknowledgment}
Radek Erban and German Enciso would like to thank the organizers of the
``Recent Developments in Mathematical and Computational Biomedicine" 
(19w5085) workshop at the Casa Matem\'atica Oaxaca (CMO) in Oaxaca, in
November 2019, where this research project was initiated. German Enciso 
and Jinsu Kim are partially supported by NSF grant DMS1763272, Simons
Foundation grant 594598 (Qing Nie) and by NSF grant DMS1616233.

\end{document}